\documentclass[11pt]{article}
\usepackage[margin=1.08in]{geometry}
\usepackage{amsmath,amssymb,amsthm,mathtools}
\usepackage{microtype}
\usepackage{array}
\usepackage[numbers,sort&compress]{natbib}
\usepackage{enumitem}
\usepackage{xcolor}
\usepackage[colorlinks=true,linkcolor=blue!55!black,citecolor=blue!55!black,urlcolor=blue!55!black]{hyperref}
\hypersetup{pdftitle={A Stretched-Exponential Bound for an Erdos--Graham Unit-Fraction Problem},pdfauthor={Samuel Korsky},pdfsubject={Unit fractions, subset sums, and an Erdos--Graham problem}}
\allowdisplaybreaks
\numberwithin{equation}{section}
\newtheorem{theorem}{Theorem}[section]
\newtheorem{lemma}[theorem]{Lemma}
\newtheorem{proposition}[theorem]{Proposition}
\newtheorem{corollary}[theorem]{Corollary}

\theoremstyle{definition}
\newtheorem{definition}[theorem]{Definition}
\theoremstyle{remark}
\newtheorem{remark}[theorem]{Remark}

\newcommand{\R}{\mathcal R}
\newcommand{\eps}{\varepsilon}
\newcommand{\dist}{\operatorname{dist}}
\newcommand{\lpf}{P^{-}}
\newcommand{\refnum}[1]{\hyperref[#1]{\ref*{#1}}}
\newcommand{\eqnref}[1]{(\hyperref[#1]{\ref*{#1}})}
\newcommand{\abs}[1]{\lvert#1\rvert}
\newcommand{\e}{\mathrm e}
\newcommand{\E}{\mathbb E}
\newcommand{\Var}{\operatorname{Var}}
\newcommand{\Comp}{\mathcal C}
\newcommand{\DirK}{\mathcal D}
\newcommand{\Div}{\mathfrak D}

\title{A Stretched-Exponential Bound for an Erd\H{o}s--Graham Unit-Fraction Problem}
\author{Samuel Korsky}
\date{\today}

\begin{document}
\maketitle

\begin{abstract}
\noindent
For a finite multiset $A$ of positive integers, write $\R(A)=\sum_{a\in A}a^{-1}$ and let $\eps(A)$ be the distance from $1$ to the largest reciprocal subsum of $A$ that does not exceed $1$. Erd\H{o}s and Graham proved that $\eps(A)\ll K^{-2}$ whenever $\R(A)>K$, and asked whether one always has $\eps(A)\leq \exp(-cK)$ for an absolute constant $c>0$. We prove the stretched-exponential estimate
\begin{equation*}
        \eps(A)\leq \exp\bigl(-c\sqrt{K\log K}\bigr)
\end{equation*}
for all sufficiently large $K$.
\end{abstract}

\section{Introduction}

For a finite multiset $A$ of positive integers, define
\begin{equation}
        \R(A):=\sum_{a\in A}\frac1a,
        \qquad
        \eps(A):=1-\max\left\{\R(S):S\subseteq A,\ \R(S)\leq1\right\},
\label{eq:definitions}
\end{equation}
where $S\subseteq A$ means a submultiset with multiplicities bounded by those of $A$. Thus $\eps(A)=0$ precisely when some submultiset has reciprocal sum exactly $1$.

Erd\H{o}s and Graham asked whether there is an absolute constant $c>0$ such that
\begin{equation}
        \R(A)>K\quad\Longrightarrow\quad \eps(A)\leq \e^{-cK}
\label{eq:EG-conjecture}
\end{equation}
for every sufficiently large $K$ and every finite multiset $A$ \cite[Section 4]{ErdosGraham1980}. They proved the polynomial estimate $\eps(A)\ll K^{-2}$; the exponential form is listed as open in the current Erd\H{o}s Problems record \cite{BloomProblem312}. We prove the following stretched-exponential estimate:

\begin{theorem}\label{thm:main}
There are absolute constants $c>0$ and $K_0$ such that every finite multiset $A$ of positive integers satisfying $\R(A)>K\geq K_0$ obeys
\begin{equation}
        \eps(A)\leq \exp\bigl(-c\sqrt{K\log K}\bigr).
\label{eq:main-bound}
\end{equation}
\end{theorem}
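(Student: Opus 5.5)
Write $L:=\sqrt{K\log K}$. The plan is to run a greedy-then-correct argument in two stages. In the first stage a large submultiset $B\subseteq A$ is used to get within a moderate distance of $1$. In the second stage a disjoint reserve $C\subseteq A$, with $\R(C)\gg K$, supplies a family of ``correction'' subsums that are dense enough at scale $\exp(-cL)$ to close the remaining gap from below.

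\textbf{Step 1: reduction and splitting.} First reduce to the case where every element of $A$ exceeds some large threshold. If $A$ contains many small integers, an exact representation of $1$ (for instance from multiplicities, via $\sum_{d\mid n}$-type identities, or from classical results that sets with large reciprocal sum over a short dyadic range represent $1$) gives $\eps(A)=0$ directly. Next partition $A$ dyadically. Since $\R(A)>K$ and each dyadic block $[N,2N)$ contributes at most $\log 2$ from distinct elements (multiplicities handled separately), there are at least $\gg K$ nonempty scales. Split these into two interleaved families $B$ and $C$, each with $\R\gg K$.

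\textbf{Step 2: coarse approximation.} Take elements of $B$ greedily from the largest scales downward to find $S_0\subseteq B$ with
\begin{equation*}
1-\delta\le \R(S_0)\le 1,
\end{equation*}
where $\delta$ is comparable to the reciprocal of the smallest element used. Choose the starting scale so that $\delta\le \exp(-c_1 L)$ cannot yet be guaranteed, but $\delta$ is comparable to $1/N$ for a controlled $N$.

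\textbf{Step 3: correction (the main step).} Show that the subsums of $C$ restricted to elements much smaller than $1/\delta$ are $\exp(-cL)$-dense in $[0,\delta]$. The idea is to choose $m\approx L/\log K$ groups among the scales of $C$. Each group has reciprocal sum $\asymp \sqrt{K/\log K}$, so that, in a mixed-radix fashion, the achievable sums in group $j$ have spacing at most the total size available in group $j+1$. Iterating this over $\asymp\sqrt{K\log K}$ dyadic levels gives geometric refinement of the resolution, each level gaining a constant factor. This is where the $\sqrt{K\log K}$ exponent should arise. The budget splits into (number of levels)$\times$(reciprocal mass per level), and the mass per level must be $\gg \log(\text{scale ratio})$ for the subsums of one level to cover an interval of length equal to the next level's spacing. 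Optimizing $\ell\cdot(\text{mass})\le K$ with mass $\asymp \ell^{-1}K$ and gain $\exp(\text{mass}/\log)$ yields $L$.

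\textbf{Expected obstacle.} The hardest part is the covering lemma within a single level: that a multiset of integers in $[N,N^{1+o(1)}]$ with reciprocal sum $\gg \log$ has subsums whose gaps are at most about $1/N$ on an interval of length about its mass. I would try to prove this with a Lagrange/Erd\H{o}s--Graham style greedy argument: sort the reciprocals and note that partial subset sums fill an interval when each term is at most the sum of all smaller terms plus the smallest one. The difficulty is that unit fractions from a sparse set need not satisfy this. Failing that, I would fall back on an $L^2$ (Fourier/second-moment) count of subset sums modulo $1/\operatorname{lcm}$, losing the $\log K$ factor, which is exactly the loss visible in the statement.
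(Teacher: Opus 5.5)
Your proposal has a genuine gap, starting in Step 1. The count of $\gg K$ nonempty dyadic scales relies on the elements being distinct. When $A$ is a set, however, the theorem is trivial and much stronger. Add elements greedily while the sum stays $\le 1$. Every rejected $a$ then gives $\eps(A)<1/a$, and the rejected elements have reciprocal sum $>K-1$. For distinct elements this forces the largest rejected element to exceed $\e^{K-2}$, so $\eps(A)<\e^{-(K-2)}$.

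All of the difficulty lies in the multiset case, which you defer with ``multiplicities handled separately.'' There both claims of Step 1 fail. The multiset $A_z$ from the introduction ($p-1$ copies of each prime $p\le z$) has $\R(A_z)\sim z/\log z$ but only $O(\log z)$ dyadic scales. It also has no subsum equal to $1$, even though every element is small. So you cannot reduce to many scales or to large denominators: in the relevant case all denominators may be $\le K^{O(1)}$.

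Consequently Step 3 must produce subsums at resolution $\exp(-c\sqrt{K\log K})$, far finer than any individual reciprocal, and nothing in your sketch does this. The covering lemma you propose (gaps about $1/N$ when all terms are $\le 1/N$) follows immediately from taking partial sums. It gives resolution no finer than the reciprocals themselves.

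The mixed-radix iteration assumes that each group's subsums form a near-progression whose spacing is at most the next group's total. That is the fine-spacing statement you need in the first place, so the argument is circular. Unit fractions do not supply this without arithmetic input: all subsums lie in $(1/\mathrm{lcm})\mathbb Z$, and divisibility coincidences among denominators are exactly what block fine equidistribution. The calibration is also off: relative to the conjectured $\e^{-cK}$, the stated bound loses a factor $\sqrt{K/\log K}$ in the exponent, not $\log K$.

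The missing ideas are the paper's:
\begin{enumerate}
\item Take an optimal $S$ and set $N=1/\eps(A)$, so the complement has all denominators $<N$.
\item Compress $p$ copies of $1/n$ into one copy of $1/(n/p)$. This yields a multiset with $m_n<\lpf(n)$ and no subsum in $(1-N^{-2},1)$.
\item Show its mass is $O(x^2/\log x)$, where $x=\log N$. The tool is a sparse random subsum: its characteristic function is controlled at high frequencies by divisor sorting in terms of $\Div_Q(E)$, and a one-sided local limit theorem forces an outcome in $(1-N^{-2},1)$ otherwise.
\item The $\log x$ saving comes from $\omega(q)\ll x/\log x$ for prime denominators, and from a rough-number sieve and divisor-incidence count for composite ones.
\end{enumerate}
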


The standard lower-bound construction shows that the problem cannot be understood from the common-denominator lattice alone. Let $A_z$ contain $p-1$ copies of each prime $p\leq z$. Reducing modulo each prime $p \le z$ shows that no submultiset has reciprocal sum $1$. On the other hand, all reciprocal subsums lie on a lattice of spacing $\prod_{p\leq z}p^{-1}=\exp(-(1+o(1))z)$. Since $\R(A_z)\sim z/\log z$, this construction gives
\begin{equation}
        \eps(A_z)\geq \exp\left(-(1+o(1))\R(A_z)\log \R(A_z)\right).
\label{eq:prime-lower-construction}
\end{equation}

\subsection{Related work}
The closest related work concerns exact representations of $1$ by reciprocal sums with distinct denominators. Croot developed Fourier methods for denominators in short intervals and proved the finite-coloring conjecture of Erd\H{o}s and Graham for unit fractions \cite{Croot2001,Croot2003}. Bloom proved a density theorem for reciprocal subsets summing to $1$ \cite{Bloom2025}, and Liu and Sawhney recently obtained a quantitative threshold for dense subsets of $[1,M]$ \cite{LiuSawhney2026}. These set-valued results are stronger in their setting, but they do not directly control \eqref{eq:EG-conjecture}. 

\subsection{Proof outline}
We briefly describe the proof. Choose a largest reciprocal subsum $S\leq1$, set $B=A\setminus S$, and write $N=\eps(A)^{-1}$ and $x=\log N$. Every denominator in $B$ is smaller than $N$. We repeatedly replace $p$ copies of $1/n$ by one formal copy of $1/(n/p)$ whenever $p\mid n$. The terminal multiset $\Comp$ has the same reciprocal mass as $B$, every subsum of $\Comp$ lifts to a genuine subsum of $A$, and its multiplicities satisfy
\begin{equation}
        m_n<\lpf(n)\qquad(n>1),
\label{eq:intro-stability}
\end{equation}
where $\lpf(n)$ denotes the least prime factor of $n$. Moreover no submultiset of $\Comp$ has reciprocal sum in $(1-N^{-2},1)$.

The analytic input is a sparse activation lemma. For a set $E$ of denominator types with $m_n/n\asymp\alpha$, one forms a random reciprocal subsum by first activating a sparse random set of types and then choosing a uniform coefficient interval at each active type. A divisor-sorting argument controls the high-frequency part of the characteristic function; a one-sided local limit lemma then forces an outcome in $(1-N^{-2},1)$ unless the mass of $E$ is small. This gives two estimates: one for all ratio classes with $\alpha x$ small, and one for prime ratio classes with $\alpha x$ large.

Finally, prime denominators contribute $O(x^2/\log x)$ by the prime activation estimate and the elementary sharper fact that an integer $q\leq \e^{O(x)}$ has only $O(x/\log x)$ distinct prime factors. Composite denominators up to $2x^4$ are controlled by a rough-number sieve with room to spare. Denominators above $2x^4$ are covered by dyadic denominator and multiplicity cells; stability implies that all prime factors are larger than the multiplicity scale, giving a usable divisor-incidence bound. The resulting estimate is
\begin{equation}
        \R(\Comp)\ll \frac{x^2}{\log x}.
\label{eq:intro-stable-mass}
\end{equation}
Since $\R(\Comp)>K-1$, this yields $x\gg\sqrt{K\log K}$ and proves Theorem~\refnum{thm:main}.

\section{Compression}\label{sec:compression}

The following deterministic reduction is used throughout. The case $\eps(A)=0$ is immediate, so all compression statements are invoked only when $\eps(A)>0$.

\begin{lemma}[Optimal complement]\label{lem:optimal-complement}
Let $A$ be a finite multiset with $\eps(A)>0$. Put $N=\eps(A)^{-1}$, and choose $S\subseteq A$ with $\R(S)=1-\eps(A)$. If $B=A\setminus S$, then
\begin{equation}
        \R(B)>\R(A)-1,
\label{eq:complement-mass}
\end{equation}
and every denominator occurring in $B$ is smaller than $N$.
\end{lemma}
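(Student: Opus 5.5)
The argument is direct and uses only the maximality of $S$. Since $\R(S)=1-\eps(A)\le 1$ and $\eps(A)>0$, we have $\R(S)<1$. Additivity of reciprocal mass over the multiset decomposition $A=S\sqcup B$ then gives
\begin{equation*}
        \R(B)=\R(A)-\R(S)=\R(A)-1+\eps(A)>\R(A)-1 .
\end{equation*}
This is \eqref{eq:complement-mass}; the strict inequality comes from $\eps(A)>0$. One should note that $S$ exists at all because $A$ has finitely many submultisets, so the maximum in \eqref{eq:definitions} is attained.

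For the denominator bound I will argue by contradiction. Suppose some $n$ occurs in $B$ with $n\ge N=\eps(A)^{-1}$, i.e.\ $1/n\le \eps(A)$. Because $n$ occurs in $B=A\setminus S$, the multiplicity of $n$ in $S$ is strictly less than its multiplicity in $A$. Hence $S':=S\cup\{n\}$, formed by adding one more copy of $n$, is still a submultiset of $A$. Its reciprocal sum is
\begin{equation*}
        \R(S')=1-\eps(A)+\frac1n\le 1-\eps(A)+\eps(A)=1 .
\end{equation*}
So $S'$ is admissible in \eqref{eq:definitions}, and $\R(S')>\R(S)$. This contradicts the choice of $S$ as attaining the maximum, so every denominator in $B$ satisfies $n<N$.

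I do not expect a real obstacle. The only points needing care are the multiset bookkeeping (adding a copy of $n$ respects the multiplicities of $A$ precisely because that copy lies in $B$) and the boundary case $1/n=\eps(A)$. That case is handled by the non-strict constraint $\R(S')\le1$: it would produce an exact subsum $1$, contradicting both maximality and $\eps(A)>0$. This is why the conclusion is the strict inequality $n<N$.
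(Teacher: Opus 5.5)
Your proposal is correct and follows essentially the same route as the paper. The mass inequality comes from $\R(S)<1$. The denominator bound comes from adding one unused copy of $n\geq N$ to $S$, which either gives a larger admissible subsum or an exact sum $1$; both contradict the maximality of $S$.
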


\begin{proof}
The inequality follows from $\R(S)<1$. If an occurrence of $n\geq N$ belonged to $B$, then $1/n\leq\eps(A)$. Adding that occurrence to $S$ would either produce a larger reciprocal subsum still not exceeding $1$, or produce the exact sum $1$. Both alternatives contradict the choice of $S$ and the assumption $\eps(A)>0$.
\end{proof}

\begin{lemma}[Stable compression]\label{lem:compression}
Starting from the multiset $B$ in Lemma~\refnum{lem:optimal-complement}, repeatedly perform the following operation: if a denominator $n$ occurs at least $p$ times for some prime $p\mid n$, replace $p$ labelled copies of $1/n$ by one labelled formal copy of $1/(n/p)$. The procedure terminates in a multiset $\Comp$ with multiplicities $m_n$ such that:
\begin{enumerate}[label=\textup{(\roman*)},leftmargin=2.5em]
\item every submultiset of $\Comp$ lifts to a submultiset of $B$ with the same reciprocal sum;
\item $\R(\Comp)=\R(B)$, and every denominator in $\Comp$ is smaller than $N$;
\item for every denominator $n>1$ occurring in $\Comp$,
\begin{equation}
        m_n<\lpf(n);
\label{eq:stable-multiplicity}
\end{equation}
\item no denominator $1$ occurs in $\Comp$.
\end{enumerate}
\end{lemma}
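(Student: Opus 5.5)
The plan is to treat the compression as a rewriting system on labelled multisets and check four things: termination, invariance of the reciprocal sum, the lifting property, and the two terminal properties (iii) and (iv).

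\textbf{Termination.} I will use the potential $\Phi=\sum_{n} m_n\,\Omega(n)$, where $\Omega$ counts prime factors with multiplicity; alternatively the total number of labelled items suffices. One operation removes $p\geq2$ items and adds one, so the item count strictly decreases. Since $B$ is finite, the process stops after at most $|B|$ steps.

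\textbf{Invariance of the sum.} We have $p\cdot\frac1n=\frac1{n/p}$, so $\R$ is unchanged at every step, and $\R(\Comp)=\R(B)$. Every new denominator $n/p$ is smaller than $n$, so by induction all denominators stay below $N$. Here I use Lemma~\refnum{lem:optimal-complement} for the initial bound on denominators in $B$.

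\textbf{Lifting (i).} Each formal copy carries as its label the set of original occurrences in $B$ it was built from. By induction, each item of the current multiset corresponds to a block of original occurrences in $B$ whose reciprocal sum equals the item's reciprocal. Distinct items have disjoint blocks, because each operation merges disjoint blocks of the $p$ consumed items. A submultiset of $\Comp$ therefore lifts to the union of its items' blocks. This union is a submultiset of $B$, and its reciprocal sum equals the submultiset's sum.

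\textbf{Stability (iii).} At termination no $n>1$ can occur with $m_n\geq p$ for any prime $p\mid n$. Taking $p=\lpf(n)$ gives $m_n<\lpf(n)$.

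\textbf{No denominator $1$ (iv).} This is the only step that is not bookkeeping. Suppose an item with denominator $1$ appears in $\Comp$. By (i) its block is a submultiset $T\subseteq B$ with $\R(T)=1$. That already exhibits a submultiset of $A$ with reciprocal sum exactly $1$, so $\eps(A)=0$. This contradicts $\eps(A)>0$. Note that disjointness from $S$ is not even needed for this.

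The main obstacle is setting up the labelling cleanly enough that the lifting in (i) is visibly a genuine submultiset, with multiplicities respected. The disjoint-block invariant handles this.
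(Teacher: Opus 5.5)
Your proposal is correct and follows essentially the same route as the paper. Labelled bundles that are disjoint unions of original $B$-items give lifting and mass preservation, the strictly decreasing item count gives termination, the terminal condition with $p=\lpf(n)$ gives stability, and a denominator-$1$ item would lift to a subsum of $A$ equal to $1$, contradicting $\eps(A)>0$.
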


\begin{proof}
The labelled interpretation is as follows. Initially each formal item is a singleton bundle of original $B$-items. A compression step replaces $p$ disjoint bundles of value $1/n$ by their union, which has total value $p/n=1/(n/p)$. Thus every later formal submultiset is a disjoint union of original $B$-items of the same reciprocal value. This proves the lifting property and preservation of total mass.

Each compression step reduces the number of formal items, so the procedure terminates. New denominators divide old denominators, hence remain smaller than $N$. At termination, for every $n>1$ and every prime $p\mid n$, fewer than $p$ copies of $n$ remain. Taking $p=\lpf(n)$ gives \eqref{eq:stable-multiplicity}. If a formal denominator $1$ occurred, then by the lifting property $A$ would contain a submultiset of reciprocal sum exactly $1$, contrary to $\eps(A)>0$.
\end{proof}

For the rest of the proof fix the stable multiset $\Comp$ supplied by Lemma~\refnum{lem:compression}, and set
\begin{equation}
        x:=\log N,
        \qquad
        \Delta:=\e^{-2x}=N^{-2},
        \qquad
        Q:=\e^{50x}.
\label{eq:x-delta-Q}
\end{equation}
Since $\Delta=N^{-2}<N^{-1}=\eps(A)$, no submultiset of $\Comp$ has reciprocal sum in $(1-\Delta,1)$. Indeed, such a submultiset lifts to a submultiset of $A$ whose reciprocal sum is larger than $1-\eps(A)$ and smaller than $1$, contradicting the definition of $S$.

\begin{definition}[Admissible compressed multiset]\label{def:admissible}
Fix $N>1$, $x=\log N$, and $\Delta=N^{-2}$. A finite multiset $\mathcal M$ of denominator types is called $N$-admissible if all its denominators are integers $2\leq n<N$ and no submultiset of $\mathcal M$ has reciprocal sum in $(1-\Delta,1)$. It is called stable if its multiplicities satisfy $m_n<\lpf(n)$ for every occurring denominator $n>1$.
\end{definition}

Thus the multiset $\Comp$ is both $N$-admissible and stable. The activation estimates use only $N$-admissibility; the stability condition is used later in the arithmetic estimates for composite denominators.

\begin{remark}[Constant hierarchy and the large-$x$ reduction]\label{rem:constant-hierarchy}
All constants are absolute unless a local lemma explicitly states otherwise. The choices are made once, in the order displayed below; no row depends on constants selected in a later row.

\begin{center}
\small
\begin{tabular}{c|>{\raggedright\arraybackslash}p{0.34\textwidth}|>{\raggedright\arraybackslash}p{0.43\textwidth}}
Stage & Constants fixed & Allowed dependencies \\
\hline
1 & smoothing function $g$ in Lemma~\refnum{lem:one-sided-llt} & none \\
2 & low-frequency constants in Lemma~\refnum{lem:box-low-frequency} and a pair $0<c_1<c_2$ & stage 1 and the stated comparison constants \\
3 & divisor-sorting constant $C_*$ and threshold $x_{\rm DS}$ & stage 2 \\
4 & local-limit constants $C_{\rm LLT}$ and $x_{\rm LLT}$ & stages 1--3 \\
5 & sparse-activation constants $C_0$ and $x_{\rm SA}$ & stages 1--4 and the local parameters $a_0,a_1,A_\mu$ \\
6 & constants in Corollaries~\refnum{cor:full-activation} and~\refnum{cor:prime-activation} & stage 5 and, for the prime corollary, the fixed lower bound $c_*$ \\
7 & package constants $x_{\rm act},c_0,C,H_0$ in Proposition~\refnum{prop:activation-package} & stage 6 \\
8 & large-composite constants $u_0,C_\beta$, then $x_0$ and $K_0$ & all previous stages
\end{tabular}
\end{center}

After this hierarchy is fixed, choose $x_0$ larger than all named thresholds, including $x_{\rm DS},x_{\rm LLT},x_{\rm act}$, and larger than the elementary thresholds used in the large-composite block estimate. The constant $C_\beta$ in \eqref{eq:rho-beta} is chosen before $x_0$; increasing $x_0$ later does not change $C_\beta$. All applications in Sections~\ref{sec:activation-package} and~\ref{sec:mass} are made under the standing assumption $x\geq x_0$.

The theorem may be proved under this standing assumption. Indeed, if $x<x_0$, then by stability and the absence of denominator $1$,
\begin{equation*}
        \R(\Comp)=\sum_{2\leq n<N}\frac{m_n}{n}
        <\sum_{2\leq n<N}1<\e^{x_0}.
\end{equation*}
But $K-1<\R(\Comp)$ by Lemmas~\refnum{lem:optimal-complement} and~\refnum{lem:compression}. Choosing $K_0>\e^{x_0}+2$ excludes the case $x<x_0$ whenever $K\geq K_0$.
\end{remark}

\section{The Activation Package}\label{sec:activation-package}

For a finite set $E$ of denominator types, define
\begin{equation}
        \Div_Q(E):=\max_{\substack{1\leq q\leq2Q\\ q\in\mathbb Z}}\#\{n\in E:n\mid q\}.
\label{eq:DQ-definition}
\end{equation}
The maximum is taken over positive integers $q$. The proof uses the following activation estimates. Their proof is deferred to Appendix~\ref{app:activation}.

\begin{proposition}[Activation package]\label{prop:activation-package}
There are absolute constants $x_{\rm act}\geq1$ and $c_0,C,H_0>0$ with the following property. Let $N=\e^x$ with $x\geq x_{\rm act}$, let $Q=\e^{50x}$, let $\mathcal M$ be an $N$-admissible multiset with multiplicities $m_n$, and let $E$ be a finite set of denominator types occurring in $\mathcal M$.
\begin{enumerate}[label=\textup{(\roman*)},leftmargin=2.5em]
\item Suppose
\begin{equation}
        \frac\alpha2<\frac{m_n}{n}\leq\alpha\qquad(n\in E)
\label{eq:ratio-class}
\end{equation}
for some $\alpha>0$. If $\alpha x\leq c_0$, then
\begin{equation}
        \sum_{n\in E}\frac{m_n}{n}
        \leq C\Div_Q(E)\sqrt{\alpha x}+H_0.
\label{eq:small-ratio-package}
\end{equation}
\item If, in addition, $E$ consists only of prime denominator types and $\alpha x\geq c_0$, then
\begin{equation}
        \sum_{n\in E}\frac{m_n}{n}
        \leq C\alpha \Div_Q(E)x+H_0.
\label{eq:prime-package}
\end{equation}
\end{enumerate}
In the main proof this proposition is applied with $\mathcal M=\Comp$.
\end{proposition}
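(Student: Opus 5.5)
We argue by contradiction via a probabilistic construction. Assume the mass $M:=\sum_{n\in E}m_n/n$ exceeds the right-hand side of \eqref{eq:small-ratio-package} (resp. \eqref{eq:prime-package}) with $C,H_0$ large. We then build a random submultiset of $\mathcal M$ whose reciprocal sum lands in $(1-\Delta,1)$ with positive probability. Since $\mathcal M$ is $N$-admissible, no such submultiset exists, which is the contradiction.

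\textbf{Construction.} Use the staged constants of Remark~\refnum{rem:constant-hierarchy}. First activate each type $n\in E$ independently with probability $\theta$, chosen so that the expected activated mass is a fixed constant $\mu\in[a_0,a_1]$ somewhat below $1$. Then, at each active $n$, choose the coefficient $k_n$ uniformly from an interval $I_n\subseteq[0,m_n]$ of length $\asymp m_n$. The random sum is $X=\sum_{n\text{ active}}k_n/n$. Before this, a deterministic part is fixed, namely a greedy subsum of the remaining mass, which moves the target so that $1$ sits inside the bulk of the distribution of $X$. This is possible because $M\ge H_0$ leaves plenty of spare mass.

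In case (i), each active type contributes a random amount of size at most $\alpha$. Its variance is $\asymp\alpha^2$, and the total variance is $\sigma^2\asymp\theta|E|\alpha^2\asymp\mu\alpha$. The low-frequency behaviour of the characteristic function $\phi(t)=\E e^{2\pi i tX}$ is then Gaussian-like on $|t|\lesssim\sigma^{-1}$; this is Lemma~\refnum{lem:box-low-frequency}. In case (ii), $\alpha x\ge c_0$ means individual coefficients are large, but the types are primes, so the lattice structure is simple and the same scheme applies with the modified scaling.

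\textbf{Local limit step.} Apply the one-sided local limit Lemma~\refnum{lem:one-sided-llt} with a smoothing function $g$ supported near a window of width $\Delta=e^{-2x}$ just below $1$. This gives
\[
\Pr\bigl(X\in(1-\Delta,1)\bigr)\ \ge\ c\Delta/\sigma\;-\;\int_{|t|>T}|\phi(t)|\,|\hat g(t)|\,dt ,
\]
and we need the error integral to be smaller than the main term $\asymp\Delta/\sigma$. The frequencies relevant to the window satisfy $|t|\le\Delta^{-1}e^{O(x)}$. Because every denominator is $<N$, the rational sums have denominators dividing integers $\le Q$, and this is why $Q=e^{50x}$ is chosen.

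\textbf{High-frequency bound (main obstacle).} For fixed $t$, a type $n$ is \emph{bad} if $\|t\,j/n\|$ is small for most $j\in I_n$. This essentially forces $t$ to lie near a rational $a/q$ with $n\mid q$, where $q\le2Q$. The divisor-sorting argument runs as follows. Any single $q$ has at most $\Div_Q(E)$ bad types, so at each frequency all but $\Div_Q(E)$ active types are good. Each good active type damps $|\phi|$ by a factor $1-c$ per activation, so
\[
|\phi(t)|\le\exp\bigl(-c\,\theta(|E|-\Div_Q(E))\bigr).
\]
This decay must beat the length $\Delta^{-1}e^{O(x)}=e^{O(x)}$ of the frequency range. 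That requires $\theta|E|\gg x$ plus a term accounting for the $\Div_Q$ exceptional types. Since $\theta|E|\alpha\asymp\mu$, the requirement becomes $|E|\alpha\gg\Div_Q(E)\sqrt{\alpha x}$ in case (i), where the $\sqrt{\cdot}$ comes from optimizing $\theta$ against the variance $\sigma^2\asymp\mu\alpha$ (the constant $C_*$). In case (ii) it becomes $|E|\alpha\gg\Div_Q(E)\alpha x$. Both conditions are exactly the failure of the stated inequalities, since $M\asymp|E|\alpha$.

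I expect the delicate point to be this uniform sorting over all $q\le2Q$ simultaneously, so I would first try a dyadic decomposition in $t$ combined with the pigeonhole count of bad types per $q$. The additive $H_0$ absorbs the small-mass regime, where the deterministic shift cannot be arranged.
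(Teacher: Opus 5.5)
\textbf{Gap 1: the high-frequency count.} Your skeleton matches the paper: contradiction, Bernoulli activation with uniform coefficient ranges, the one-sided smoothed local limit, and divisor counting at high frequency. The quantitative core, however, is wrong. You claim that ``at each frequency all but $\Div_Q(E)$ active types are good'', but this does not follow from ``each $q$ has at most $\Div_Q(E)$ bad types''. Write $D=\Div_Q(E)$ and $\delta_n(t)=\dist(\abs t/2\pi,n\mathbb Z)$. A type $n$ with range $r_n$ gives full damping at $t$ only when $\delta_n(t)\gg n/r_n\asymp s$. So $n$ is resonant whenever \emph{some} integer in a window of length $\asymp s$ around $\abs t/2\pi$ is a multiple of $n$. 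That window contains $\asymp s$ integers, each with up to $D$ divisors in $E$, so up to $\asymp sD$ types can be resonant at a single frequency. No dyadic decomposition in $t$ repairs this.

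What the paper does instead:
\begin{itemize}
\item it proves the resonance count $\#\{n:\delta_n(t)\le R\}\le A(R+1)D$;
\item it sorts the distances, obtaining $\delta_{(C_*D+j)}\gg\min\{j/D,s\}$;
\item it sums the \emph{partial} damping $\min\{1,\delta_n^2/s^2\}$ to get $\theta\sum_n\min\{1,r_n^2\|t/(2\pi n)\|^2\}\gg\min\{s,T^2/(D^2s)\}$.
\end{itemize}
The second term is the actual source of $\sqrt{\alpha x}$. With $\theta=2/H$, $s\asymp1/\alpha$ and $T\asymp H/\alpha$, it is $\asymp H^2/(\alpha D^2)$, and requiring it to exceed $C_0x$ is exactly $H\gg D\sqrt{\alpha x}$.

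Your own bound, taken literally, would give a contradiction as soon as $T\ge 2D$, since $s\gg x$ is automatic in case (i). That would mean $H\ll\alpha D+H_0$, far stronger than the proposition. The ``optimizing $\theta$ against the variance'' explanation corresponds to no actual step, because $\theta$ is pinned by the mean condition.

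\textbf{Gap 2: case (ii) needs a different construction.} It is not a rescaling. With coefficient intervals of length $\asymp m_n$, the mean condition forces $s=\theta T\asymp1/\alpha\le x/c_0$, which can be $O(1)$ when $\alpha\asymp1$. But two requirements force $s\ge C_0x$ with $C_0$ large:
\begin{itemize}
\item the local limit step needs $s\ge C_{\rm LLT}x$;
\item the high-frequency bound $\abs{\psi}\le\e^{-100x}$ needs it too, since the total damping is at most $\theta T=s$.
\end{itemize}
Since $c_0$ must be small for part (i), this fails.

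The paper instead proceeds as follows:
\begin{itemize}
\item it fixes $\theta=1/(MD)$ and $s=\theta T$, so that $T^2/(D^2s)=M^2s$ holds automatically;
\item it shrinks the ranges to $r_p\approx 2m_p/(\theta H)\le m_p/2$;
\item it discards the types $p<\lambda s_0$, costing at most $H/10$ mass, so that $r_p\ge1$;
\item it applies the one-sided rounding lemma to keep $0\le 1-\mu\le 1/T$.
\end{itemize}
The failure of (ii) then gives $s\gg C_{\rm pr}x/M$. Primality plays no role in this step.

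Your greedy deterministic shift is unnecessary: tuning $\theta$ continuously already makes the mean exactly $1$ in case (i). It is the two points above that are genuine gaps.
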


\section{Mass Estimates and Proof of the Theorem}\label{sec:mass}

By Remark~\refnum{rem:constant-hierarchy}, for $K\geq K_0$ we may assume throughout this section that $x\geq x_0$. We prove
\begin{equation}
        \R(\Comp)\ll \frac{x^2}{\log x}.
\label{eq:target-stable-mass}
\end{equation}
Together with \eqref{eq:complement-mass}, this is enough for Theorem~\refnum{thm:main}. Throughout this section, ``dyadic'' means values in a fixed geometric progression of ratio $2$; changing the initial point changes estimates only by absolute constants.

\subsection{Prime denominators}
Let $\Comp_{\rm pr}$ be the prime-denominator part of $\Comp$. For a dyadic value $\alpha$, put
\begin{equation*}
        E(\alpha):=\left\{p\in\Comp_{\rm pr}:\frac\alpha2<\frac{m_p}{p}\leq\alpha\right\}.
\end{equation*}
There are $O(x)$ nonempty dyadic values of $\alpha$, because every occurring type satisfies $m_p/p>1/N=\e^{-x}$ and $m_p/p<1$.

We use the sharp elementary divisor-count bound
\begin{equation}
        \omega(q)\ll \frac{x}{\log x}\qquad(1\leq q\leq2Q).
\label{eq:omega-sharp}
\end{equation}
Indeed, if $q$ has $r$ distinct prime factors, then
\begin{equation*}
        q\geq\prod_{j=1}^r p_j\geq\prod_{j=1}^r(j+1)=(r+1)!,
\end{equation*}
where $p_j$ is the $j$th prime. Stirling's formula gives $\log q\gg r\log r$ for $r\geq2$, while $q\leq2Q\leq\e^{51x}$ for large $x$. This proves \eqref{eq:omega-sharp}. Since $E(\alpha)$ consists of primes, any integer $q$ is divisible by at most $\omega(q)$ elements of $E(\alpha)$; hence
\begin{equation}
        \Div_Q(E(\alpha))\ll \frac{x}{\log x}
\label{eq:prime-D-sharp}
\end{equation}
for every prime ratio class.

Let $H_\alpha=\sum_{p\in E(\alpha)}m_p/p$. Applying Proposition~\refnum{prop:activation-package} gives
\begin{equation}
H_\alpha\ll
\begin{cases}
        \dfrac{x}{\log x}\sqrt{\alpha x}+1, & \alpha x<c_0,\smallskip\\
        \dfrac{\alpha x^2}{\log x}+1, & \alpha x\geq c_0.
\end{cases}
\label{eq:prime-level-bounds}
\end{equation}
Summing over dyadic $\alpha$ gives
\begin{equation}
        \R(\Comp_{\rm pr})
        \ll \frac{x}{\log x}\sum_{\alpha x<c_0}\sqrt{\alpha x}
        +\frac{x^2}{\log x}\sum_{\alpha x\geq c_0}\alpha
        +O(x)
        \ll \frac{x^2}{\log x}.
\label{eq:prime-total}
\end{equation}

\subsection{Small composite denominators}
We use the following rough-number estimate, proved in Appendix~\ref{app:arithmetic}.

\begin{lemma}[Small stable composites]\label{lem:small-composites}
Let $Z\geq3$, and let $(m_n)$ be a multiplicity function on composite integers $2\leq n\leq Z$ satisfying
\begin{equation}
        0\leq m_n<\lpf(n)\qquad(n\leq Z,\ n\ {\rm composite}).
\label{eq:small-composite-stability-hyp}
\end{equation}
Then
\begin{equation}
        \sum_{\substack{n\leq Z\\ n\ {\rm composite}}}\frac{m_n}{n}
        \ll \frac{\sqrt Z}{(\log Z)^2}.
\label{eq:small-composite-sieve}
\end{equation}
\end{lemma}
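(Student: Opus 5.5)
Since $m_n<\lpf(n)$, we bound each term by $m_n/n\le (\lpf(n)-1)/n<\lpf(n)/n$. It therefore suffices to show
\begin{equation*}
        \sum_{\substack{n\leq Z\\ n\ {\rm composite}}}\frac{\lpf(n)}{n}\ll\frac{\sqrt Z}{(\log Z)^2}.
\end{equation*}
We group the composites by their least prime factor $p=\lpf(n)$ and write $n=pm$. Then $m\geq p$, every prime factor of $m$ is at least $p$, and $pm\le Z$. In particular $p\le\sqrt Z$, and the contribution of the class with least prime $p$ is
\begin{equation*}
        \sum_{\substack{p\le m\le Z/p\\ \lpf(m)\ge p}}\frac1m .
\end{equation*}
The $\sqrt Z$ in the final bound comes only from the primes $p$ near $\sqrt Z$, so the work is in estimating this inner sum carefully.

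\emph{Inner sum.} Write $\Phi(y,p)=\#\{m\le y:\lpf(m)\ge p\}$ for the rough-number count. For $y\geq p^2$ we have $\Phi(y,p)\ll y/\log p$, either by the fundamental lemma of the sieve or from Buchstab's function. For $p\le y<p^2$ the only $p$-rough numbers are the primes, so $\Phi(y,p)\le\pi(y)-\pi(p)+1$. By partial summation,
\begin{equation*}
        \sum_{\substack{p\le m\le Z/p\\ \lpf(m)\ge p}}\frac1m\ll \frac{1}{\log p}\log\frac{\log(Z/p)}{\log p}+\frac1p .
\end{equation*}
The first term comes from the primes in $[p,Z/p]$ together with the rough composites, whose contribution is $\ll \log(\log(Z/p)/\log p)/\log p$, using $\sum_{m\le y,\ p\text{-rough}}1/m\ll \log y/\log p$ in Mertens form. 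The term $1/p$ accounts for $m=p$ itself. This bound is too weak when summed over all $p\le\sqrt Z$, since it gives roughly $\pi(\sqrt Z)$. The correct accounting puts the weight $\lpf(n)/n=1/m$ with $m\le Z/p$ and $m\ge p$. So the inner sum is at most a constant times the number of admissible $m$ divided by $p$, taking $m$ at its smallest, and a cruder but sharper-in-$p$ bound is
\begin{equation*}
        \sum_{\substack{p\le m\le Z/p\\ \lpf(m)\ge p}}\frac1m\ \le\ \frac{\Phi(Z/p,p)}{p}\cdot O(1)\quad\text{only when }Z/p\asymp p .
\end{equation*}

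\emph{Main obstacle.} As literally stated, the bound $\sqrt Z/(\log Z)^2$ cannot hold with the crude reduction above, since $\sum_{p\le\sqrt Z}1/p$ alone already gives a term of size $\log\log Z$. Taking $m_{p^2}=p-1$ for all $p\le\sqrt Z$ gives $\sum (p-1)/p^2\asymp\log\log Z$, which is $\ll\sqrt Z/(\log Z)^2$, so that is fine. The real task is to check that the total is at most $\sqrt Z/(\log Z)^2$.

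\emph{Summation over $p$.} Using the pointwise estimate $\Phi(Z/p,p)\ll Z/(p\log p)$ for $p\le Z^{1/3}$, and the count of primes for $p\in(Z^{1/3},\sqrt Z]$, the contribution of $p$ is $\ll (\log\log Z)/\log p$. Summed over $p\le\sqrt Z$ this gives $\ll\pi(\sqrt Z)\log\log Z/\log Z\asymp\sqrt Z\log\log Z/(\log Z)^2$, so the $\log\log$ must be removed. I would do this by splitting $p$ dyadically. For $p\in(Z^{1/2-\delta},Z^{1/2}]$ the range $[p,Z/p]$ is short, of logarithmic length $\ll\delta\log Z$. The inner sum is then about $\log(\log(Z/p)/\log p)/\log p\asymp(\log Z-2\log p)/(\log Z)^2$, which when summed against $\pi$ near $\sqrt Z$ gives $\sqrt Z/(\log Z)^2$ times $\int_0^\infty e^{-t}t\,dt$ after substituting $p=\sqrt Z e^{-t}$. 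Primes well below $\sqrt Z$ contribute geometrically less. Carrying out this dyadic Stieltjes summation with the prime number theorem is the main technical step.
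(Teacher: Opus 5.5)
Your reduction is exactly the paper's: bound $m_n/n$ by $\lpf(n)/n$, write $n=pm$ with $p=\lpf(n)\le\sqrt Z$, and reduce to the inner sums $S_p=\sum_{p\le m\le Z/p,\ \lpf(m)\ge p}1/m$. The gap is in your estimate for $S_p$.

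\textbf{The inner-sum bound is false.} Your displayed bound $S_p\ll\frac{1}{\log p}\log\frac{\log(Z/p)}{\log p}+\frac1p$ fails in two ways.
\begin{itemize}
\item By Mertens, the primes in $[p,Z/p]$ alone contribute $\log\frac{\log(Z/p)}{\log p}+O(1/\log p)$, with no factor $1/\log p$ in front. For example, this is $\log 2+o(1)$ at $p\approx Z^{1/3}$.
\item At $p=2$ the inner sum is the full harmonic sum, so $S_2\asymp\log Z$.
\end{itemize}
In the critical range $Z^{1/4}<p\le\sqrt Z/2$, $S_p$ is of order $\log(Z/p^2)/\log Z$, not $\log(Z/p^2)/(\log Z)^2$ as you write.

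\textbf{Your final summation does not follow from your bound.} Put your stated size into your own substitution $p=\sqrt Z\,\e^{-t}$, with prime density $\asymp\sqrt Z\,\e^{-t}\,dt/\log Z$. The result is $\sqrt Z/(\log Z)^3$, not $\sqrt Z/(\log Z)^2$. That cannot be correct, because the lemma is sharp: take $m_{pq}=p-1$ for primes $\sqrt Z/4<p\le\sqrt Z/2$ and $p<q\le 4p$, which already gives a sum $\gg\sqrt Z/(\log Z)^2$. So the order you announce comes from a second slip cancelling the first. The ``$\log\log Z$ obstacle'' is an artifact of the misstated bound. The decisive summation, via the prime number theorem, is left as an unexecuted plan.

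\textbf{The repair uses only what you already wrote down.}
\begin{itemize}
\item Your two cases combine into $\Phi(t,p)\ll t/\log p$ for all $t\ge p$. For $p\le t<p^2$, use $\pi(t)+1\ll t/\log t\le t/\log p$.
\item Partial summation then gives
\[
S_p\le\frac{p\,\Phi(Z/p,p)}{Z}+\int_p^{Z/p}\frac{\Phi(t,p)}{t^2}\,dt\ll\frac{1+\log(Z/p^2)}{\log p}.
\]
\item Primes $p\le Z^{1/4}$ contribute $O(Z^{1/4}\log Z)$ trivially.
\item For $Z^{1/4}<p\le\sqrt Z$, split into ranges $2^{-k-1}\sqrt Z<p\le 2^{-k}\sqrt Z$. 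There $S_p\ll(k+1)/\log Z$, and Chebyshev's bound gives $O(2^{-k}\sqrt Z/\log Z)$ primes. The total is therefore $\ll\frac{\sqrt Z}{(\log Z)^2}\sum_{k\ge0}(k+1)2^{-k}$.
\end{itemize}
This is precisely the paper's proof. It needs no prime number theorem and no removal of a $\log\log Z$ factor.
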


The stable multiplicities of $\Comp$ satisfy \eqref{eq:small-composite-stability-hyp}. Taking $Z=2x^4$ gives
\begin{equation}
        \sum_{\substack{n\leq 2x^4\\ n\ {\rm composite}}}\frac{m_n}{n}
        \ll \frac{x^2}{(\log x)^2}.
\label{eq:small-composite-total}
\end{equation}

\subsection{Large composite denominators}
Fix once and for all an absolute cutoff $u_0>2$. For $x^4<P<N$ and dyadic $u\geq u_0$, define
\begin{equation}
        \rho_P(u):=\left\lfloor\frac{\log(2P)}{\log(u/2)}\right\rfloor,
        \qquad
        \beta(P):=C_\beta\left(1+\frac{x}{\log P}\right),
\label{eq:rho-beta}
\end{equation}
where $C_\beta$ is a sufficiently large absolute constant. In the argument that follows we write $\rho(u)$ for $\rho_P(u)$ when $P$ is fixed.

\begin{proposition}[Large stable composite contribution]\label{prop:large-composite-total}
Assume $x\geq x_0$. Then the stable $N$-admissible multiset $\Comp$ satisfies
\begin{equation}
        \sum_{\substack{2x^4<n<N\\ n\ {\rm composite}}}\frac{m_n}{n}
        \ll \frac{x^2}{\log x}.
\label{eq:large-composite-total}
\end{equation}
The implied constant is absolute.
\end{proposition}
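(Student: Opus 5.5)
The plan is to reorganize the large composite types of $\Comp$ by \emph{ratio class} instead of by denominator size. This way each application of Proposition~\refnum{prop:activation-package} costs only one additive $H_0$, and there are only $O(x)$ applications. For dyadic $\alpha$ let $E_{\rm c}(\alpha)$ be the set of composite $n\in(2x^4,N)$ occurring in $\Comp$ with $\alpha/2<m_n/n\le\alpha$. Since $m_n/n\in(\e^{-x},1)$, there are $O(x)$ nonempty classes, so the $H_0$ terms contribute $O(x)$ in total.

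The first step is to show that only part (i) of the proposition is ever needed. For composite $n$ stability gives $m_n<\lpf(n)\le\sqrt n$, hence $m_n/n<n^{-1/2}<(2x^4)^{-1/2}<x^{-2}$. Thus every nonempty class has $\alpha x\ll x^{-1}\le c_0$ once $x\ge x_0$, which is presumably why the cutoff $2x^4$ was chosen. Proposition~\refnum{prop:activation-package}(i) then gives
\begin{equation*}
\sum_{2x^4<n<N,\ n\ {\rm composite}}\frac{m_n}{n}\le C\sum_{\alpha}\Div_Q\bigl(E_{\rm c}(\alpha)\bigr)\sqrt{\alpha x}+O(x).
\end{equation*}

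The second step is the divisor-incidence bound, which is where stability enters. Split $E_{\rm c}(\alpha)$ into dyadic denominator cells $n\in[P,2P)$. On such a cell $m_n\asymp u:=\alpha P$, and stability forces every prime factor of $n$ to exceed $u/2$. Hence $\Omega(n)\le\rho_P(u)$. Fix $q\le2Q\le\e^{51x}$. It has at most $r(u):=51x/\log(u/2)$ prime factors exceeding $u/2$, counted with multiplicity. Any $n$ in the cell dividing $q$ is a product of at most $\rho_P(u)$ of these, so
\begin{equation*}
\#\{n\in[P,2P)\cap E_{\rm c}(\alpha):n\mid q\}\le\binom{r(u)+\rho_P(u)}{\rho_P(u)}.
\end{equation*}
Here $r/\rho\asymp x/\log P$, so this binomial is at most $\exp\bigl(O(\rho_P(u)\log\beta(P))\bigr)$ with $\beta(P)$ as in \eqref{eq:rho-beta}. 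I expect the exact role of $C_\beta$ is to absorb this estimate, for instance as $\bigl(e\beta(P)\bigr)^{\rho_P(u)}$. Then $\Div_Q(E_{\rm c}(\alpha))$ is at most the sum of these bounds over dyadic $P$ in $(x^4,N)$, with $u=\alpha P\ge u_0$. Cells with $u<u_0$ have bounded multiplicity, and I would treat them separately by the trivial $\rho\le\log(2P)/\log(u_0/2)$ bound, or absorb them into the same count.

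The final step, and the main obstacle, is the double sum over $(\alpha,P)$, equivalently over $(u,P)$ with $\alpha=u/P$:
\begin{equation*}
\sum_{P}\sum_{u\ge u_0}\sqrt{\frac{ux}{P}}\,\bigl(e\beta(P)\bigr)^{\rho_P(u)}.
\end{equation*}
This must be $\ll x^2/\log x$. The mechanism should be as follows. When $u$ is large, $\rho_P(u)$ is small, and the factor $\sqrt{u/P}\le P^{-1/4}$ (recall $u\le\sqrt{2P}$) is tiny compared with $(e\beta)^{\rho}$. When $u$ is small, $\rho$ is large, and the combinatorial factor could explode.

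In the small-$u$ regime I would abandon activation for the offending cells. Instead I would use the trivial rough-number bound: the mass of a cell is at most $(u/P)\cdot\#\{n\in[P,2P):\lpf(n)>u/2\}\ll u/\log u$. Summed over the relevant $u\le U(P)$, this gives $\ll U(P)/\log U(P)$ per $P$. I would choose the threshold $U(P)$ so that $\rho_P(U)\log(e\beta(P))\le\tfrac18\log P$. Then the activation bound beyond $U$ gives a total $\ll x^{1/2}P^{-1/8}\log P$ per $P$, which is summable over dyadic $P>x^4$ to $O(1)$. The remaining task is to check that $\sum_P U(P)/\log U(P)\ll x^2/\log x$. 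Since $\log U\asymp\log P\cdot\log\beta/\log P\asymp\log\beta(P)$ and there are $O(x)$ dyadic $P$, this is the delicate bookkeeping. The trivial bound counts multiplicity mass, so for it I would bound each cell by $\min\bigl(u/\log u,\ \text{activation bound}\bigr)$. I expect this balancing, together with checking that the ratio-class merging keeps the $H_0$ count at $O(x)$ (only the activation cells are grouped into $E_{\rm c}(\alpha)$), to be the crux.
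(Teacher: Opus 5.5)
There is a genuine gap, in the final summation over $u$ and $P$, which is the real quantitative content of the proposition; the threshold you propose for it fails.

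The rest of your setup is sound. Merging all cells you choose to activate into one set per ratio class is a legitimate, slightly more uniform alternative to the paper's split. There, cells with $B(P,u)\ge1$ are activated individually and only cells with $B(P,u)<1$ are merged by $\alpha$. Your version pays $H_0$ only $O(x)$ times, with $\Div_Q$ bounded by the sum of the cell bounds from Lemma~\ref{lem:large-composite-divisors}.

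The problem is the threshold. Requiring $(\e\beta(P))^{\rho_P(U)}\le P^{1/8}$ forces $\log(U/2)\gtrsim 8\log(\e\beta(P))$ whenever $\rho$ is large, so $U(P)\approx(\e\beta(P))^{8}$. Take a block with $\log P\asymp\sqrt x$. Then $\beta(P)\asymp\sqrt x$ and $U(P)\asymp x^4$, still far below $\sqrt P$. Your trivial estimate for that single block is then of order $U/\log U\asymp x^4/\log x$, already far above $x^2/\log x$.

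Counting over the $O(x)$ blocks, as your closing sentence suggests, cannot rescue this. The per-block bound $\beta(P)^2$ is as large as $x^2/(\log x)^2$ when $\log P\asymp\log x$. So any argument that multiplies a uniform per-block bound by $O(x)$ loses about a factor $x$. You must use that $\beta(P)^2\asymp(x/\log P)^2$ decays in $P$.

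What is missing is the crossing analysis of Lemmas~\ref{lem:crossing-sum} and~\ref{lem:dyadic-cell-sum}. Put $v=\log(u/2)$, $\lambda=\log\beta(P)$, $Y=\log(2P)$ and $L=\log x$. The activation bound $B(P,u)$ is at most a constant times $\e^{\Phi(v)}$, where $\Phi(v)=\lambda Y/v+(L+v-Y)/2$. The switch from the trivial bound $u\asymp\e^v$ to activation must be made at the crossing $v_\times$ of $v$ and $\Phi(v)$, which satisfies $v_\times\le 2\lambda+O(1)$. The lemma's hypotheses $\lambda=L-\log Y+O(1)$ and $Y\ge 4L-O(1)$ come from the definition of $\beta$ and the cutoff $P>x^4$. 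The sum then splits into three ranges:
\begin{itemize}
\item Below $v_\times$, the terms $\e^v$ grow geometrically, so they contribute $\ll \e^{v_\times}$.
\item Just above $v_\times$, one has $\Phi'\le -c$, so the terms decay geometrically from $\e^{\Phi(v_\times)}\le\e^{v_\times}$.
\item For $v\ge 2\lambda+O(1)$, convexity of $\Phi$ gives $\Phi\le 2\lambda-3\log Y$, which absorbs the $O(Y)$ remaining dyadic values.
\end{itemize}
Together these give $\sum_u\min\{u,B(P,u)\}\ll\beta(P)^2$ for each $P$. Then $\sum_P\beta(P)^2\ll\int_{4L}^{x}(1+x/y)^2\,dy\ll x^2/\log x$, with the main contribution from $\log P\asymp\log x$.

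Your closing remark about taking $\min\{u/\log u,\ \text{activation bound}\}$ cell by cell points the right way. But the switch must sit at $u\approx\beta(P)^2$, not $\beta(P)^8$. Without the tail-decay argument past the crossing and the decay of $\beta(P)^2$ in $P$, the double sum over $(u,P)$ is not controlled.
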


\begin{proof}
For each dyadic $P$ with $x^4<P<N$ and each dyadic multiplicity scale $u$, put
\begin{equation*}
        E_{P,u}:=\{n\in[P,2P): n\ {\rm composite},\ u/2<m_n\leq u\}.
\end{equation*}
For a dyadic ratio $\alpha$, put
\begin{equation*}
        E_{\alpha,P,u}:=\left\{n\in E_{P,u}:\frac\alpha2<\frac{m_n}{n}\leq\alpha\right\}.
\end{equation*}
For fixed $(P,u)$, only $O(1)$ dyadic values of $\alpha$ are nonempty, since $P\leq n<2P$ and $u/2<m_n\leq u$ force $m_n/n\in(u/(4P),u/P]$. For each such ratio class,
\begin{equation}
        \alpha\asymp \frac uP.
\label{eq:large-composite-alpha-asymp}
\end{equation}

If $n\in E_{P,u}$, then stability and compositeness imply
\begin{equation}
        u\ll\sqrt P.
\label{eq:large-composite-u-small}
\end{equation}
Indeed, stability gives $m_n<\lpf(n)$, while compositeness gives $\lpf(n)\leq\sqrt n<\sqrt{2P}$. Hence every nonempty $E_{\alpha,P,u}$ with $P>x^4$ satisfies
\begin{equation}
        \alpha x\ll \frac{xu}{P}\ll\frac{x}{\sqrt P}<c_0,
\label{eq:large-composite-alpha-small}
\end{equation}
provided the global threshold $x_0$ is large enough. Thus only the small-ratio part of Proposition~\refnum{prop:activation-package} is used below.

The finitely many ranges $u<u_0$ contribute $O(1)$ for each dyadic $P$. Since there are $O(x)$ relevant dyadic $P$-values, their total contribution is $O(x)$, which is $O(x^2/\log x)$ for large $x$. We therefore restrict to $u\geq u_0$.

For $u\geq u_0$, Lemmas~\refnum{lem:large-composite-divisors} and~\refnum{lem:dyadic-cell-sum} give, for every nonempty $E_{\alpha,P,u}$,
\begin{equation}
        \Div_Q(E_{\alpha,P,u})\leq \beta(P)^{\rho(u)},
\label{eq:large-composite-D}
\end{equation}
and
\begin{equation}
        \sum_{\substack{u\ {\rm dyadic}\\ u_0\leq u\ll\sqrt P}}
        \min\left\{u,\beta(P)^{\rho(u)}\sqrt{\frac{xu}{P}}\right\}
        \ll \beta(P)^2.
\label{eq:large-composite-cell-sum}
\end{equation}
Set
\begin{equation}
        B(P,u):=\beta(P)^{\rho(u)}\sqrt{\frac{xu}{P}}.
\label{eq:B-P-u}
\end{equation}
The dyadic intervals are taken half-open. Thus each denominator $n$ with $n>2x^4$ and $m_n\geq u_0$ belongs to exactly one dyadic pair $(P,u)$, and its ratio $m_n/n$ belongs to exactly one dyadic interval $(\alpha/2,\alpha]$. Consequently the sets $E_{\alpha,P,u}$ are disjoint as the triple $(\alpha,P,u)$ varies.

First consider the nonexceptional cells, those for which $B(P,u)\geq1$. For each such nonempty $E_{\alpha,P,u}$, Proposition~\refnum{prop:activation-package}, \eqref{eq:large-composite-alpha-small}, and \eqref{eq:large-composite-D} give
\begin{equation*}
        \sum_{n\in E_{\alpha,P,u}}\frac{m_n}{n}
        \ll B(P,u)+H_0\ll B(P,u),
\end{equation*}
because $H_0$ is absolute and $B(P,u)\geq1$. The trivial estimate
\begin{equation*}
        \sum_{n\in E_{\alpha,P,u}}\frac{m_n}{n}
        \leq \frac{u}{P}\cdot\#\{n:P\leq n<2P\}
        \ll u
\end{equation*}
is also available. Combining these two bounds gives
\begin{equation}
        \sum_{n\in E_{\alpha,P,u}}\frac{m_n}{n}
        \ll \min\{u,B(P,u)\}.
\label{eq:large-composite-nonexceptional-cell}
\end{equation}
There are only $O(1)$ nonempty ratio classes for each pair $(P,u)$, so \eqref{eq:large-composite-cell-sum} gives
\begin{equation}
        \sum_{\substack{\alpha,P,u:\ x^4<P<N,\ u\geq u_0\\ E_{\alpha,P,u}\ne\varnothing,\ B(P,u)\geq1}}
        \sum_{n\in E_{\alpha,P,u}}\frac{m_n}{n}
        \ll \sum_{\substack{P\ {\rm dyadic}\\ x^4<P<N}}\beta(P)^2.
\label{eq:large-composite-nonexceptional-total}
\end{equation}

It remains to handle the exceptional cells, for which $B(P,u)<1$. The point is to group these cells by the ratio parameter before applying activation. For each dyadic $\alpha$, define
\begin{equation*}
        F_\alpha:=
        \bigcup_{\substack{P,u:\ x^4<P<N,\ u\geq u_0\\ E_{\alpha,P,u}\ne\varnothing,\ B(P,u)<1}}
        E_{\alpha,P,u}.
\end{equation*}
If $F_\alpha$ is nonempty, then it is still a single ratio class satisfying
\begin{equation*}
        \frac\alpha2<\frac{m_n}{n}\leq\alpha\qquad(n\in F_\alpha),
\end{equation*}
and \eqref{eq:large-composite-alpha-small} gives $\alpha x<c_0$. Also
\begin{equation}
        \Div_Q(F_\alpha)
        \leq
        \sum_{\substack{P,u:\ x^4<P<N,\ u\geq u_0\\ E_{\alpha,P,u}\ne\varnothing,\ B(P,u)<1}}
        \Div_Q(E_{\alpha,P,u})
        \leq
        \sum_{\substack{P,u:\ x^4<P<N,\ u\geq u_0\\ E_{\alpha,P,u}\ne\varnothing,\ B(P,u)<1}}
        \beta(P)^{\rho(u)}.
\label{eq:exceptional-D-sum}
\end{equation}
Applying Proposition~\refnum{prop:activation-package} to $F_\alpha$ and using \eqref{eq:large-composite-alpha-asymp} gives
\begin{align}
        \sum_{n\in F_\alpha}\frac{m_n}{n}
        &\ll \Div_Q(F_\alpha)\sqrt{\alpha x}+H_0  \notag\\
        &\ll
        \sum_{\substack{P,u:\ x^4<P<N,\ u\geq u_0\\ E_{\alpha,P,u}\ne\varnothing,\ B(P,u)<1}}
        \beta(P)^{\rho(u)}\sqrt{\frac{xu}{P}}+H_0  \notag\\
        &=
        \sum_{\substack{P,u:\ x^4<P<N,\ u\geq u_0\\ E_{\alpha,P,u}\ne\varnothing,\ B(P,u)<1}}
        B(P,u)+H_0.
\label{eq:exceptional-alpha-bound}
\end{align}
There are $O(x)$ nonempty dyadic ratio parameters $\alpha$, because all occurring ratios $m_n/n$ lie in $(\e^{-x},1)$. Summing \eqref{eq:exceptional-alpha-bound} over $\alpha$ gives
\begin{equation}
        \sum_{\alpha}\sum_{n\in F_\alpha}\frac{m_n}{n}
        \ll
        \sum_{\substack{\alpha,P,u:\ x^4<P<N,\ u\geq u_0\\ E_{\alpha,P,u}\ne\varnothing,\ B(P,u)<1}}B(P,u)+O(x).
\label{eq:exceptional-before-cell-sum}
\end{equation}
For exceptional cells, $B(P,u)<1<u$, so $B(P,u)=\min\{u,B(P,u)\}$. Since only $O(1)$ ratio classes occur for each $(P,u)$, \eqref{eq:large-composite-cell-sum} implies
\begin{equation}
        \sum_{\alpha}\sum_{n\in F_\alpha}\frac{m_n}{n}
        \ll \sum_{\substack{P\ {\rm dyadic}\\ x^4<P<N}}\beta(P)^2+O(x).
\label{eq:exceptional-total}
\end{equation}

Combining the low-$u$ contribution, \eqref{eq:large-composite-nonexceptional-total}, and \eqref{eq:exceptional-total}, we obtain
\begin{equation}
        \sum_{\substack{2x^4<n<N\\ n\ {\rm composite}}}\frac{m_n}{n}
        \ll
        \sum_{\substack{P\ {\rm dyadic}\\ x^4<P<N}}\beta(P)^2+O(x).
\label{eq:large-composite-pre-total}
\end{equation}
Here the lower cutoff is covered correctly: if $2x^4<n<N$ and $P$ is the dyadic number with $P\leq n<2P$, then $P>n/2>x^4$ and $P<N$, so the corresponding block is present in the sum.

It remains to sum over $P$. Put $L=\log x$ and $y=\log P$. Since dyadic values of $P$ make $y$ spaced by the fixed amount $\log2$, and since $4L<y<x$, a Riemann-sum comparison gives
\begin{align}
        \sum_{\substack{P\ {\rm dyadic}\\ x^4<P<N}}\beta(P)^2
        &\ll
        \int_{4L}^{x}\left(1+\frac{x}{y}\right)^2\,dy
        +\left(1+\frac{x}{4L}\right)^2  \notag\\
        &\ll
        x+x\log x+\frac{x^2}{\log x}
        \ll \frac{x^2}{\log x}.
\label{eq:large-composite-beta-sum}
\end{align}
The $O(x)$ term in \eqref{eq:large-composite-pre-total} is absorbed by the same bound. This proves \eqref{eq:large-composite-total}.
\end{proof}

Combining \eqref{eq:prime-total}, \eqref{eq:small-composite-total}, and Proposition~\refnum{prop:large-composite-total}, we obtain
\begin{equation}
        \R(\Comp)\ll \frac{x^2}{\log x}.
\label{eq:stable-mass-total}
\end{equation}
On the other hand, Lemmas~\refnum{lem:optimal-complement} and~\refnum{lem:compression} give $K-1<\R(\Comp)$. Hence, for an absolute constant $C_1$,
\begin{equation}
        K-1<\R(\Comp)\leq C_1\cdot\frac{x^2}{\log x}.
\label{eq:K-x-log-bound}
\end{equation}
For $K$ sufficiently large, \eqref{eq:K-x-log-bound} yields
\begin{equation*}
        x\geq c\sqrt{K\log K}
\end{equation*}
for an absolute $c>0$. Since $x=\log(1/\eps(A))$, we obtain $\eps(A)=\e^{-x}\leq\e^{-c\sqrt{K\log K}}$, proving Theorem~\refnum{thm:main}.

\appendix

\section{Proof of the Activation Package}\label{app:activation}

All constants in this appendix are positive and absolute unless explicitly stated otherwise. Whenever a result in this appendix has a parameter $x$, the local standing notation is
\begin{equation}
        N=\e^x,\qquad \Delta=\e^{-2x},\qquad Q=\e^{50x}.
\label{eq:appendix-standing-notation}
\end{equation}
The estimates are uniform for every $N$-admissible ambient multiset. The only use of admissibility is at the final contradiction: a random construction that produces a reciprocal subsum in $(1-\Delta,1)$ is impossible by Definition~\refnum{def:admissible}.

\subsection{Dirichlet factors and box moments}
For $\ell\geq1$, set
\begin{equation}
        \DirK_\ell(u):=\frac1{\ell+1}\sum_{j=0}^{\ell}\e^{iju}.
\label{eq:dirichlet-kernel}
\end{equation}
We write $\|y\|_{\mathbb R/\mathbb Z}$ for the distance from $y$ to the nearest integer.

\begin{lemma}\label{lem:dirichlet-factor}
There is an absolute constant $c>0$ such that, for $0<\theta\leq1/2$, $\ell\geq1$, and $u\in\mathbb R$,
\begin{equation}
        \abs{1-\theta+\theta\DirK_\ell(u)}
        \leq
        \exp\left[-c\theta\min\left\{1,\ell^2\left\|\frac{u}{2\pi}\right\|_{\mathbb R/\mathbb Z}^2\right\}\right].
\label{eq:dirichlet-factor-bound}
\end{equation}
\end{lemma}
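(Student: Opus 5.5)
The plan is to reduce to a bound on the Dirichlet kernel itself and then use convexity. Write $t=\|u/(2\pi)\|_{\mathbb R/\mathbb Z}\in[0,1/2]$ and $D=\DirK_\ell(u)$. Expanding,
\begin{equation*}
        \abs{1-\theta+\theta D}^2=(1-\theta)^2+2\theta(1-\theta)\operatorname{Re}D+\theta^2\abs{D}^2.
\end{equation*}
Since $\operatorname{Re}D\leq\abs{D}$ and $\abs{D}^2\le\abs{D}$, this gives
\begin{equation*}
        \abs{1-\theta+\theta D}^2\leq 1-2\theta(1-\theta)(1-\abs D)-\theta^2(1-\abs D^2)\leq 1-\theta(1-\abs{D}),
\end{equation*}
where the last step uses $2(1-\theta)\geq1$ for $\theta\leq1/2$. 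Combined with $1-y\leq \e^{-y}$, we get $\abs{1-\theta+\theta D}\leq\exp[-\tfrac{\theta}{2}(1-\abs D)]$. It therefore suffices to prove the kernel estimate
\begin{equation*}
        1-\abs{\DirK_\ell(u)}\geq c'\min\{1,\ell^2t^2\}
\end{equation*}
for an absolute $c'>0$, and then take $c=c'/2$.

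For the kernel, use the closed form
\begin{equation*}
        \abs{\DirK_\ell(u)}=\frac{\abs{\sin((\ell+1)u/2)}}{(\ell+1)\abs{\sin(u/2)}},
\end{equation*}
which depends only on $t$: one has $\abs{\sin(u/2)}=\sin(\pi t)$, and the numerator is $\abs{\sin((\ell+1)\pi t)}$ up to sign. Split into two regimes.

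In the regime $(\ell+1)t\geq 1/2$, use $\sin(\pi t)\geq 2t$ to get $\abs{D}\leq 1/(2(\ell+1)t)$. When $(\ell+1)t\geq1$ this gives $\abs D\le 1/2$. For $1/2\leq(\ell+1)t<1$, instead use the strict inequality $\abs{\sin(m s)}<m\sin s$ for $m=\ell+1\ge2$ and $0<s<\pi/m$ with $s=\pi t$, compactly on $ms\in[\pi/2,\pi]$; concretely, the function $\abs{\sin(my)}/(m\sin y)$ is bounded by an absolute constant $<1$ there. A clean way to see this is the elementary inequality $\sin(my)\leq m\sin y\cdot(1-\kappa (my)^2)$ for $my\le\pi$, $m\ge2$. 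In either case $1-\abs D\geq c''$.

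In the regime $(\ell+1)t<1/2$, use the averaging representation. Since $\operatorname{Re}D=\frac{1}{\ell+1}\sum_j\cos(ju)$ and $1-\cos v\geq (2/\pi^2)v^2$ for $\abs v\leq\pi$, and all $ju$ reduce to angles of size $2\pi jt\le\pi$,
\begin{equation*}
        1-\operatorname{Re}\bigl(\e^{-i\ell u/2}D\bigr)\gg \frac{1}{\ell+1}\sum_{j=0}^{\ell}\bigl((j-\ell/2)2\pi t\bigr)^2\gg \ell^2t^2.
\end{equation*}
Here $\e^{-i\ell u/2}D$ is real and nonnegative in this regime, so the left side equals $1-\abs D$. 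The cases $\ell\ge1$ and $t$ tiny give $\ell^2t^2\asymp(\ell+1)^2t^2$ within factor $4$.

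The main obstacle is the intermediate window $(\ell+1)t\approx 1/2$ to $1$, where neither the crude $1/(2(\ell+1)t)$ bound nor the Taylor expansion is automatically sharp. I would handle it as above: the quadratic-averaging computation in fact remains valid for all $(\ell+1)t\leq1$, because then all angles $(j-\ell/2)2\pi t$ lie in $[-\pi,\pi]$. This lets the two regimes overlap and gives $1-\abs D\gg\min\{1,\ell^2t^2\}$ uniformly.
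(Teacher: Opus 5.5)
Your proof is correct, but it is organized differently from the paper's. The paper never passes to $\abs{\DirK_\ell}$. It uses $\abs{1-\theta+\theta z}^2\leq1-2\theta(1-\theta)(1-\operatorname{Re}z)$ and bounds $1-\operatorname{Re}\DirK_\ell(v)$ for the \emph{uncentred} sum $\frac1{\ell+1}\sum_{j=0}^{\ell}\cos(jv)$, with $d=\abs{v}=2\pi t$. Since the angles $jv$ run up to $\ell d$, the paper argues in three ranges:
\begin{itemize}
\item it applies the quadratic lower bound for $1-\cos$ to every term only when $\ell d\leq1/2$;
\item it uses the geometric-series bound $\abs{\DirK_\ell}\leq1/4$ once $d\geq4\pi/(\ell+1)$;
\item it bridges the window in between by truncating the sum to $j\leq\lfloor1/(2d)\rfloor$ (the dropped terms are nonnegative), together with a compactness argument for bounded $\ell$, so its constant is not explicit.
\end{itemize}

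You instead prove the stronger estimate $1-\abs{\DirK_\ell}\gg\min\{1,\ell^2t^2\}$, which your reduction needs since it replaces $\operatorname{Re}D$ by $\abs{D}$. The symmetric rotation by $\e^{-i\ell u/2}$ is what makes this cheap: it makes the kernel real and halves the largest angle to $\pi\ell t$. So $1-\cos v\geq2v^2/\pi^2$ on $\abs v\leq\pi$ covers all of $(\ell+1)t\leq1$, and it meets the closed-form bound $\abs{D}\leq1/(2(\ell+1)t)\leq1/2$ exactly at $(\ell+1)t=1$. The result is a two-case proof with explicit constants, namely $1-\abs{D}\geq\frac12\min\{1,\ell^2t^2\}$, and no compactness step.

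A few small clean-ups:
\begin{itemize}
\item When you extend the averaging to $(\ell+1)t\leq1$, say explicitly that you normalize $u=2\pi t$. This is legitimate since $\abs D$ depends only on $t$, whereas $\e^{-i\ell u/2}$ changes sign under $u\mapsto u+2\pi$ when $\ell$ is odd.
\item After that normalization, the rotated kernel equals $\sin((\ell+1)\pi t)/((\ell+1)\sin\pi t)\geq0$ on that whole range. This nonnegativity is what lets you identify $1-\operatorname{Re}(\e^{-i\ell u/2}D)$ with $1-\abs D$, and you should state it.
\item With that in place, your middle paragraph is redundant and can be deleted. As written, its compactness claim would need uniformity in $m$ that you do not supply, and the inequality with $\kappa$ is unproved.
\item The reduction is immediate from the triangle inequality, $\abs{1-\theta+\theta D}\leq1-\theta(1-\abs D)\leq\exp[-\theta(1-\abs D)]$, which also saves the factor $1/2$.
\end{itemize}
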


\begin{proof}
Let $v\in[-\pi,\pi]$ represent $u$ modulo $2\pi$, and put $d=\abs v$. We first prove
\begin{equation}
        1-\operatorname{Re}\DirK_\ell(v)
        \gg \min\{1,\ell^2d^2\}.
\label{eq:dirichlet-real-lower}
\end{equation}
The case $d=0$ is trivial. Suppose first that $0<d\leq(2\ell)^{-1}$. Then $\abs{jv}\leq1/2$ for $0\leq j\leq\ell$, and $1-\cos y\geq y^2/4$ for $\abs y\leq1/2$ gives
\begin{equation*}
        1-\operatorname{Re}\DirK_\ell(v)
        =\frac1{\ell+1}\sum_{j=0}^{\ell}(1-\cos jv)
        \geq \frac{d^2}{4(\ell+1)}\sum_{j=0}^{\ell}j^2
        \gg \ell^2d^2.
\end{equation*}

Next suppose that $d\geq4\pi/(\ell+1)$. The geometric-series formula and the inequality $\sin(d/2)\geq d/\pi$ for $0\leq d\leq\pi$ give
\begin{equation*}
        \abs{\DirK_\ell(v)}
        =\frac1{\ell+1}\left|\frac{\sin((\ell+1)v/2)}{\sin(v/2)}\right|
        \leq \frac{\pi}{(\ell+1)d}
        \leq\frac14.
\end{equation*}
Hence $1-\operatorname{Re}\DirK_\ell(v)\geq3/4$, which is stronger than \eqref{eq:dirichlet-real-lower} in this range.

It remains to treat
\begin{equation*}
        (2\ell)^{-1}<d<\frac{4\pi}{\ell+1}.
\end{equation*}
Choose
\begin{equation*}
        M:=\left\lfloor\frac1{2d}\right\rfloor.
\end{equation*}
For all sufficiently large $\ell$, the displayed range implies $M\geq c_0\ell$ with an absolute $c_0>0$; the finitely many smaller values of $\ell$ are absorbed by decreasing the final absolute constant, since the left side of \eqref{eq:dirichlet-real-lower} is continuous and positive on the corresponding compact set $d\geq(2\ell)^{-1}$. For $0\leq j\leq M$ we have $\abs{jv}\leq1/2$, so
\begin{equation*}
        1-\operatorname{Re}\DirK_\ell(v)
        \geq \frac{d^2}{4(\ell+1)}\sum_{j=0}^{M}j^2
        \gg \ell^2d^2.
\end{equation*}
This proves \eqref{eq:dirichlet-real-lower} in all cases. Since $d=2\pi\|u/(2\pi)\|_{\mathbb R/\mathbb Z}$, the factor $(2\pi)^2$ is absorbed into the absolute constant.

For $\abs z\leq1$ and $0<\theta\leq1/2$,
\begin{equation*}
        \abs{1-\theta+\theta z}^2
        \leq 1-2\theta(1-\theta)(1-\operatorname{Re}z)
        \leq \exp\{-\theta(1-\operatorname{Re}z)\}.
\end{equation*}
Taking $z=\DirK_\ell(u)$ and using \eqref{eq:dirichlet-real-lower} proves the lemma, after decreasing $c$.
\end{proof}

\begin{lemma}[Box moments]\label{lem:box-low-frequency}
Fix $0<a_0\leq a_1<\infty$. Let $E$ be a finite set of size $T$, let $0<\theta\leq1/2$, put $s=\theta T\geq1$, and suppose positive integers $r_n$ satisfy
\begin{equation}
        \frac{a_0}{s}\leq\frac{r_n}{n}\leq\frac{a_1}{s}
        \qquad(n\in E).
\label{eq:box-ratio}
\end{equation}
Let $\xi_n$ be independent Bernoulli variables of mean $\theta$, let $V_n$ be independent and uniform on $\{0,1,\ldots,r_n\}$, and put
\begin{equation*}
        Y_n:=\frac{\xi_nV_n}{n},
        \quad X:=\sum_{n\in E}Y_n,
        \quad \mu:=\E\left[X\right],
        \quad \psi(t):=\E\left[\e^{it(X-\mu)}\right].
\end{equation*}
Then
\begin{equation}
        \Var(X)\asymp \frac1s,
        \qquad
        \sum_{n\in E}\E\left[\abs{Y_n-\E\left[Y_n\right]}^3\right]\ll\frac1{s^2},
\label{eq:box-moments}
\end{equation}
with constants depending only on $a_0,a_1$. Moreover, for some $\eta,c,C>0$ depending only on $a_0,a_1$, and for $\abs t\leq\eta s$,
\begin{equation}
        \log\psi(t)=-\frac{\Var(X)t^2}{2}+O\left(\frac{\abs t^3}{s^2}\right),
        \qquad
        \abs{\psi(t)}\leq\exp\left(-c\cdot\frac{t^2}{s}\right),
\label{eq:box-low-frequency}
\end{equation}
where the logarithm is the branch chosen continuously from $t=0$.
\end{lemma}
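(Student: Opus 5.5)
We argue term by term, using independence of the $Y_n$ throughout. Write $w_n:=r_n/n\asymp 1/s$. For a single $n$, $V_n$ is uniform on $\{0,\dots,r_n\}$, so $\E V_n=r_n/2$ and $\Var V_n=r_n(r_n+2)/12\asymp r_n^2$; this uses $r_n\geq1$. Hence
\begin{equation*}
\Var(Y_n)=\frac{\theta\Var V_n+\theta(1-\theta)(\E V_n)^2}{n^2}\asymp \theta w_n^2\asymp\frac{\theta}{s^2},
\end{equation*}
where $1-\theta\geq1/2$ keeps the lower bound uniform. Summing over the $T$ elements of $E$ and using $\theta T=s$ gives $\Var(X)\asymp 1/s$. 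For the third moment, $\abs{Y_n-\E Y_n}\leq \max\{Y_n,\E Y_n\}\ll w_n$. Splitting on $\xi_n=1$ (probability $\theta$) versus $\xi_n=0$ (where the deviation is $\E Y_n\ll\theta w_n$) gives $\E\abs{Y_n-\E Y_n}^3\ll \theta w_n^3+\theta^3w_n^3\ll\theta/s^3$. Summing yields $\ll 1/s^2$. All constants depend only on $a_0,a_1$.

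For the low-frequency expansion, let $\psi_n(t)=\E\,\e^{it(Y_n-\E Y_n)}$, so that $\psi=\prod_n\psi_n$. Since $\abs{Y_n-\E Y_n}\leq a_1/s$, choosing $\eta\leq 1/(4a_1)$ forces $\abs{t(Y_n-\E Y_n)}\leq1/4$ for $\abs t\leq\eta s$. A Taylor expansion then gives
\begin{equation*}
\psi_n(t)=1-\tfrac{t^2}{2}\Var(Y_n)+R_n,\qquad \abs{R_n}\leq \tfrac{\abs t^3}{6}\E\abs{Y_n-\E Y_n}^3 .
\end{equation*}
In particular $\abs{\psi_n(t)-1}\leq t^2\Var(Y_n)\ll \eta^2\theta\leq 1/2$ after shrinking $\eta$. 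Because of this, $\log\psi_n$ is defined by the principal branch, and its continuous choice from $t=0$ agrees with the sum of the principal logs. We then apply $\log(1+z)=z+O(\abs z^2)$. The square term is $O(t^4\Var(Y_n)^2)$, and
\begin{equation*}
t^4\Var(Y_n)^2\ll t^4\theta^2/s^4\leq \abs t^3\,\eta s\,\theta/s^4\ll \abs t^3\theta/s^3 ,
\end{equation*}
using $\theta\leq1$. Summing over $n$ gives $\log\psi(t)=-\tfrac{t^2}{2}\Var(X)+O(\abs t^3/s^2)$.

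For the modulus bound, rather than deducing it from the expansion (which would need $\abs t/s$ small relative to the variance constant), we bound each factor directly. From the expansion just proved, $\abs{\psi_n(t)}\leq 1-\tfrac{t^2}{2}\Var(Y_n)+\abs{R_n}$, and
\begin{equation*}
\abs{R_n}\ll \abs t^3\theta/s^3\leq \eta\, t^2\theta/s^2 .
\end{equation*}
Since $\Var(Y_n)\gg\theta/s^2$, shrinking $\eta$ lets the error absorb at most half of the main term. This gives $\abs{\psi_n(t)}\leq\exp(-c\,t^2\theta/s^2)$, and multiplying over $T$ factors gives $\exp(-c\,t^2/s)$. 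The only delicate point is the order of constant choices: $\eta$ must be fixed after the implied constants in the variance and third-moment bounds, and all of these depend only on $a_0,a_1$. The hypothesis $s\geq1$ is not needed for these estimates beyond ensuring the range $\abs t\leq\eta s$ is nontrivial.
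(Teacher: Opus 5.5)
Your proof is correct and follows the paper's route. You compute the moments directly to get $\Var(Y_n)\asymp\theta/s^2$ and $\E\abs{Y_n-\E Y_n}^3\ll\theta/s^3$. You then Taylor-expand each factor to third order, sum principal logarithms to obtain the continuous branch, and absorb the quadratic log-error using $\abs t\leq\eta s$.

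The only variation is in the modulus bound. You apply the triangle inequality to the Taylor expansion and absorb the cubic remainder into half the quadratic term. The paper instead symmetrizes via $\abs{\phi_n(t)}^2=\E\cos(t(Y_n-Y_n'))$ with an independent copy $Y_n'$. Both work equally well here, because $\abs{t(Y_n-\E Y_n)}$ is uniformly small on the range $\abs t\leq\eta s$.
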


\begin{proof}
For $V$ uniform on $\{0,1,\ldots,r\}$,
\begin{equation*}
        \E\left[V\right]=\frac r2,
        \qquad
        \E\left[V^2\right]=\frac{r(2r+1)}6.
\end{equation*}
Thus
\begin{equation*}
        \Var(Y_n)=\theta\cdot\frac{r_n(2r_n+1)}{6n^2}-\theta^2\cdot\frac{r_n^2}{4n^2}
        \asymp \theta\cdot\frac{r_n^2}{n^2},
\end{equation*}
because $0<\theta\leq1/2$ and $r_n\geq1$. Summing and using \eqref{eq:box-ratio} gives $\Var(X)\asymp\theta T/s^2=1/s$. Put $R_n:=r_n/n$. Since $0\leq Y_n\leq R_n$ and
\begin{equation*}
        \E[Y_n]=\theta\cdot\frac{r_n}{2n}\leq \theta R_n,
        \qquad
        \E[Y_n^3]\leq \theta R_n^3,
\end{equation*}
the elementary inequality $\abs{y-a}^3\leq4(y^3+a^3)$ for $y,a\geq0$ gives
\begin{equation*}
        \E\left[\abs{Y_n-\E\left[Y_n\right]}^3\right]
        \leq 4\E[Y_n^3]+4\E[Y_n]^3
        \ll \theta R_n^3
        =\theta\cdot\frac{r_n^3}{n^3}.
\end{equation*}
Summing over $n$ proves the third-moment estimate.

Write $W_n=Y_n-\E\left[Y_n\right]$ and $\phi_n(t)=\E\left[\e^{itW_n}\right]$. If $\abs t\leq\eta s$ with $\eta$ small enough, then $\abs{tW_n}\leq2a_1\eta$ and Taylor's formula gives
\begin{equation*}
        \phi_n(t)=1-\frac{t^2}{2}\cdot\E\left[W_n^2\right]+O(\abs t^3\E\left[\abs{W_n}^3\right]),
        \qquad
        \abs{\phi_n(t)-1}\leq1/2.
\end{equation*}
For this choice of $\eta$, every $\phi_n(t)$ lies in the disk $\abs{z-1}\leq1/2$ whenever $\abs t\leq\eta s$. We take the principal logarithm of each factor in this disk. Since $\psi(t)=\prod_n\phi_n(t)$ and $\psi(0)=1$, the sum of these logarithms is the continuous branch of $\log\psi(t)$ starting from $0$ at $t=0$. Taking logarithms and summing over $n$ gives
\begin{equation*}
        \log\psi(t)=-\frac{\Var(X)t^2}{2}
        +O\left(\abs t^3\sum_n\E\left[\abs{W_n}^3\right]\right)
        +O\left(\sum_n\abs{\phi_n(t)-1}^2\right).
\end{equation*}
We now bound the squared-error term explicitly. From $r_n/n\asymp1/s$ and $s=\theta T$,
\begin{equation}
        \sum_{n\in E}\Var(Y_n)^2
        \ll T\cdot\frac{\theta^2}{s^4}
        =\frac{\theta}{s^3}\leq\frac1{s^3},
        \qquad
        \sum_{n\in E}\E\left[\abs{W_n}^3\right]\ll\frac1{s^2}.
\label{eq:box-second-third-sums}
\end{equation}
Moreover each third absolute moment is $O(\theta/s^3)$, so
\begin{equation}
        \sum_{n\in E}\E\left[\abs{W_n}^3\right]^2
        \ll T\cdot\frac{\theta^2}{s^6}
        =\frac{\theta}{s^5}\leq\frac1{s^5}.
\label{eq:box-third-square-sum}
\end{equation}
The Taylor estimate for $\phi_n(t)-1$ and \eqref{eq:box-second-third-sums}--\eqref{eq:box-third-square-sum} therefore give
\begin{equation*}
        \sum_n\abs{\phi_n(t)-1}^2
        \ll t^4\sum_n\Var(Y_n)^2
             +\abs t^6\sum_n\E\left[\abs{W_n}^3\right]^2
        \ll \frac{t^4}{s^3}+\frac{\abs t^6}{s^5}.
\end{equation*}
For $\abs t\leq\eta s$, the last display is at most
$C(\eta+\eta^3)\abs t^3/s^2$, and after reducing $\eta$ it is absorbed into the stated error term. This proves the cumulant expansion.

For the modulus bound, let $Y_n'$ be an independent copy of $Y_n$. Since $\abs{t(Y_n-Y_n')}\leq2a_1\eta$, the inequality $1-\cos u\gg u^2$ in this range gives
\begin{equation*}
        \abs{\phi_n(t)}^2
        =\E\left[\cos(t(Y_n-Y_n'))\right]
        \leq \exp(-c t^2\Var(Y_n)).
\end{equation*}
Multiplication over $n$ gives the stated decay.
\end{proof}

\subsection{Divisor sorting}
For $t\in\mathbb R$ and a denominator type $n$, define
\begin{equation}
        \delta_n(t):=\dist\left(\frac{\abs t}{2\pi},n\mathbb Z\right).
\label{eq:delta-definition}
\end{equation}

\begin{lemma}[Divisor sorting with resonant loss]\label{lem:divisor-sorting}
Fix constants $0<c_-\leq c_+<\infty$ and $c_1>0$. There is an absolute integer constant $C_*\geq1$ and a threshold $x_{\rm DS}=x_{\rm DS}(c_1,c_+)$ such that the following holds for all $x\geq x_{\rm DS}$. Let $N=\e^x$ and $Q=\e^{50x}$. Let $E$ be a finite set of denominator types, each smaller than $N$, put $T=\abs E$, and let $s\geq1$. Suppose positive integers $r_n$ satisfy
\begin{equation}
        \frac{c_-}{s}\leq\frac{r_n}{n}\leq\frac{c_+}{s}
        \qquad(n\in E).
\label{eq:r-comparable}
\end{equation}
If $E=\varnothing$, the assertion is the trivial bound $0\geq0$. If $E\ne\varnothing$, then $D:=\Div_Q(E)\geq1$, and uniformly for $c_1s\leq\abs t\leq Q$ one has
\begin{equation}
\sum_{n\in E}\min\left\{1,r_n^2\left\|\frac{t}{2\pi n}\right\|_{\mathbb R/\mathbb Z}^2\right\}
\gg_{c_-,c_+,c_1}
\min\left\{T',\frac{(T')^3}{D^2s^2}\right\},
\quad
T':=(T-C_*D)_+.
\label{eq:divisor-sorting-loss}
\end{equation}
Consequently, if $E\ne\varnothing$ and $T\geq2C_*\Div_Q(E)$, then
\begin{equation}
\sum_{n\in E}\min\left\{1,r_n^2\left\|\frac{t}{2\pi n}\right\|_{\mathbb R/\mathbb Z}^2\right\}
\gg_{c_-,c_+,c_1}
\min\left\{T,\frac{T^3}{\Div_Q(E)^2s^2}\right\}.
\label{eq:divisor-sorting}
\end{equation}
\end{lemma}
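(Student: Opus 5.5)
Fix $t$ with $c_1s\le\abs t\le Q$ and write $\tau=\abs t/(2\pi)$, so that $\|t/(2\pi n)\|_{\mathbb R/\mathbb Z}=\delta_n(t)/n$. By \eqref{eq:r-comparable}, $r_n\delta_n(t)/n\asymp\delta_n(t)/s$. Each summand in \eqref{eq:divisor-sorting-loss} is therefore $\gg\min\{1,\delta_n^2/s^2\}$, with constants depending on $c_\pm$. The plan is to show that few $n\in E$ can have $\delta_n$ small.

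\textbf{Step 1: the near-resonant types.} For a threshold $\lambda>0$, put $E_\lambda=\{n\in E:\delta_n<\lambda\}$. For $n\in E_\lambda$ write $\tau=k_nn+e_n$ with integer $k_n$ and $\abs{e_n}<\lambda$. The key claim is that if $\lambda$ is small compared with $\tau/T$, then every $n$ in a large subfamily of $E_\lambda$ divides a common integer $q\le 2Q$. For this I use the pigeonhole principle on the pair $(k_n,e_n)$. Two types $n,n'$ in $E_\lambda$ satisfy $\abs{k_nn-k_{n'}n'}<2\lambda$. If $\lambda<1/4$, this forces $k_nn=k_{n'}n'$, and then the common value $q=\mathrm{round}(\tau)$ is divisible by all of them, with $q\le\tau+1\le 2Q$. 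Hence $\#E_{1/4}\le D$.

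\textbf{Step 2: larger windows.} For $\lambda\ge1/4$, the values $k_nn$ all lie in the integer window $(\tau-\lambda,\tau+\lambda)$, which has at most $2\lambda+1$ integers. Each integer $q$ in the window is a multiple of at most $D$ elements of $E$; since $q\le\tau+\lambda\le 2Q$ when $\lambda\le Q$, this uses only the definition \eqref{eq:DQ-definition}. So $\#E_\lambda\le(2\lambda+1)D\le C_*\lambda D$ for $\lambda\ge1/4$, and $\#E_{1/4}\le D$.

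\textbf{Step 3: sorting and summing.} Order $E$ by increasing $\delta_n$. The $j$-th smallest value then satisfies $\delta_{(j)}\gg j/D$ once $j>C_*D$. Set $j'=j-C_*D$. The sum in \eqref{eq:divisor-sorting-loss} is
\[
\gg\sum_{j'=1}^{T'}\min\Bigl\{1,\frac{j'^2}{D^2s^2}\Bigr\}\gg\min\Bigl\{T',\frac{(T')^3}{D^2s^2}\Bigr\},
\]
by comparing the sum with $\int_0^{T'}\min\{1,y^2/(Ds)^2\}\,dy$ in the two regimes $T'\le Ds$ and $T'>Ds$.

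\textbf{The hypothesis $\abs t\ge c_1s$.} I expect the main obstacle to be this hypothesis and the ranges where $\tau$ is smaller than $n$. If $\tau<n/2$, then $k_n=0$ and $\delta_n=\tau\gg s$, so the summand is $\gg_{c_1}1$ because $r_n\tau/n\gg\tau/s\ge c_1/(2\pi)$. Such terms only help. The issue is that the counting of Step 2 must exclude the integer $q=0$, which is not a positive integer and is not controlled by $D$. The fix is to treat the types with $k_n=0$ separately, each contributing $\gg1$. The threshold $x_{\rm DS}$ is needed only to guarantee $\tau+\lambda\le 2Q$ for $\lambda\le s\le N$; since $s\ll N$ follows from $r_n\ge1$ and $n<N$, this holds once $x$ is large.

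\textbf{The consequence \eqref{eq:divisor-sorting}.} This is immediate: $T\ge2C_*D$ gives $T'\ge T/2$.
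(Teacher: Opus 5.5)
Your proposal is correct and is essentially the paper's argument: you bound the number of types with $\delta_n(t)<\lambda$ by $O((\lambda+1)D)$ using the $O(\lambda+1)$ integers within $\lambda$ of $\abs{t}/2\pi$, sort the $\delta_n$, and sum $\min\{1,j^2/(D^2s^2)\}$ beyond the first $C_*D$ terms. The only difference is how you exclude $q=0$, and it is minor: the paper caps the window radius at $\kappa s$ with $4\pi\kappa<c_1$, so the window contains only positive integers and $\delta_{(C_*D+j)}\gg\min\{j/D,s\}$; you instead split off the $T_0$ types with $n>2\tau$, each contributing $\gg_{c_-,c_1}1$, and recombining is routine because, with $T_+'=(T-T_0-C_*D)_+$, one has $T'\leq T_0+T_+'$ and $\min\{a+b,(a+b)^3/K\}\leq 2a+8\min\{b,b^3/K\}$.
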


\begin{proof}
The empty case has already been separated, so assume $E\ne\varnothing$ and put $D:=\Div_Q(E)$. Since each $n\in E$ satisfies $n<N=\e^x<Q$, taking $q=n$ in the definition of $\Div_Q$ shows that $D\geq1$.

Choose $\kappa>0$ so small that $4\pi\kappa<c_1$. From $r_n\geq1$ and $r_n/n\leq c_+/s$, every $n\in E$ satisfies $n\geq s/c_+$. Since also $n<N=\e^x$, we have $s<c_+\e^x$. We choose $x_{\rm DS}$ large enough, in terms of $c_1,c_+$, that for every $x\geq x_{\rm DS}$,
\begin{equation}
        \kappa c_+\e^x<\frac Q2,
        \qquad
        \frac Q{2\pi}+\kappa c_+\e^x<2Q.
\label{eq:DS-threshold-conditions}
\end{equation}
Thus $\kappa s<Q/2$, and the later bound $q<2Q$ is automatic under the hypotheses of the lemma.

We first prove a resonance-count estimate. Fix $0\leq R\leq\kappa s$. If $\delta_n(t)\leq R$, then there is an integer $k$ such that
\begin{equation*}
        \left|\frac{\abs t}{2\pi}-kn\right|\leq R.
\end{equation*}
The integer $q:=kn$ is positive, because $\abs t/(2\pi)\geq c_1s/(2\pi)>R$. By \eqref{eq:DS-threshold-conditions}, it also satisfies
\begin{equation*}
        q\leq \frac Q{2\pi}+R
        \leq \frac Q{2\pi}+\kappa s<2Q.
\end{equation*}
Thus $n$ divides one of the positive integers in the interval $[\abs t/(2\pi)-R,\abs t/(2\pi)+R]$. This interval contains at most $2\lfloor R\rfloor+3\leq A(R+1)$ integers, where $A\geq6$ is a fixed integer. By the definition of $D$,
\begin{equation}
        \#\{n\in E:\delta_n(t)\leq R\}
        \leq A(R+1)D .
\label{eq:resonance-count}
\end{equation}

If $T\leq C_*D$ for the integer $C_*:=4A+10$, then $T'=0$ and \eqref{eq:divisor-sorting-loss} is trivial. Hence suppose $T>C_*D$. Order the numbers $\delta_n(t)$ increasingly as $\delta_{(1)}\leq\cdots\leq\delta_{(T)}$. Since $C_*D$ is an integer, the index $C_*D+j$ below is an integer. We claim that, for every integer $1\leq j\leq T-C_*D$,
\begin{equation}
        \delta_{(C_*D+j)}\geq c\cdot\min\left\{\frac{j}{D},s\right\}
\label{eq:ordered-delta-growth}
\end{equation}
with $c>0$ depending only on $c_1$. If $j\leq4A\kappa Ds$, put $R=j/(4AD)$. Then $R\leq\kappa s$, and \eqref{eq:resonance-count} gives
\begin{equation*}
        \#\{n:\delta_n(t)\leq R\}
        \leq A\left(\frac{j}{4AD}+1\right)D
        =\frac j4+AD
        <C_*D+j.
\end{equation*}
Therefore $\delta_{(C_*D+j)}>R$. If $j>4A\kappa Ds$, take $R=\kappa s$. Then
\begin{equation*}
        \#\{n:\delta_n(t)\leq R\}
        \leq A(\kappa s+1)D
        <\frac j4+AD
        <C_*D+j,
\end{equation*}
so $\delta_{(C_*D+j)}>\kappa s$. This proves \eqref{eq:ordered-delta-growth} after decreasing $c$.

By \eqref{eq:r-comparable},
\begin{equation*}
        r_n\left\|\frac{t}{2\pi n}\right\|_{\mathbb R/\mathbb Z}
        =\frac{r_n}{n}\cdot\delta_n(t)
        \asymp_{c_-,c_+} \frac{\delta_n(t)}s .
\end{equation*}
With $T'=(T-C_*D)_+$, \eqref{eq:ordered-delta-growth} gives
\begin{align*}
&\sum_{n\in E}\min\left\{1,r_n^2\left\|\frac{t}{2\pi n}\right\|_{\mathbb R/\mathbb Z}^2\right\}  \\
&\qquad\gg_{c_-,c_+,c_1}
\sum_{1\leq j\leq T'}\min\left\{1,\frac{j^2}{D^2s^2}\right\}.
\end{align*}
If $T'\leq Ds$, the last sum is $\gg (T')^3/(D^2s^2)$. If $T'>Ds$, then the partial sum over $1\leq j\leq\lfloor Ds\rfloor$ is $\gg Ds$; when $T'\leq2Ds$ this is already $\gg T'$, while when $T'>2Ds$ the indices $j>\lceil Ds\rceil$ contribute $\gg T'$ more terms, each equal to $1$ inside the minimum. Hence in this second case the last sum is $\gg T'$. This proves \eqref{eq:divisor-sorting-loss}. When $T\geq2C_*D$, one has $T'\asymp T$, giving \eqref{eq:divisor-sorting}.
\end{proof}

\subsection{The one-sided local limit step}
Throughout this subsection we use the Fourier convention
\begin{equation}
        \widehat f(u)=\int_{\mathbb R}f(y)\e^{-iuy}\,dy,
        \qquad
        f(y)=\frac1{2\pi}\int_{\mathbb R}\widehat f(u)\e^{iuy}\,du
\label{eq:fourier-convention}
\end{equation}
for Schwartz functions. With this normalization, for every real $z$,
\begin{equation}
        g\left(\frac{z-1}{\Delta}\right)
        =\frac{\Delta}{2\pi}\int_{\mathbb R}\widehat g(\Delta t)\e^{it(z-1)}\,dt,
\label{eq:scaled-fourier-convention}
\end{equation}
which is the source of the factor $\Delta/(2\pi)$ in \eqref{eq:llt-fourier-X} below.

\begin{lemma}[One-sided smoothed local limit]\label{lem:one-sided-llt}
Fix constants $0<b_0\leq b_1$, $b_2,b_3,b_4,b_5>0$ and $0<c_1<c_2$. Let $g\in C_c^\infty((-1,0))$ be nonnegative with $\int g>0$. There are constants $C_{\rm LLT}$ and $x_{\rm LLT}$, depending only on these fixed constants and on $g$, with the following property.

Set $\Delta=\e^{-2x}$ and $Q=\e^{50x}$. Let $X=\sum_{i=1}^JX_i$ be a finite sum of independent real random variables with mean $\mu$, variance $\sigma^2$, and centered characteristic function
\begin{equation*}
        \psi(t):=\E\left[\e^{it(X-\mu)}\right].
\end{equation*}
Assume $x\geq x_{\rm LLT}$ and $C_{\rm LLT}x\leq s\leq\e^{2x}$, and suppose
\begin{equation}
        \frac{b_0}{s}\leq\sigma^2\leq\frac{b_1}{s},
        \qquad
        0\leq1-\mu\leq\frac{b_2}{s},
        \qquad
        \sum_{i=1}^J\E\left[\abs{X_i-\E \left[X_i\right]}^3\right]\leq\frac{b_3}{s^2}.
\label{eq:llt-moments}
\end{equation}
Assume also that, for $\abs t\leq c_2s$,
\begin{equation}
        \left|\log\psi(t)+\frac{\sigma^2t^2}{2}\right|
        \leq b_4\cdot\frac{\abs t^3}{s^2},
        \qquad
        \abs{\psi(t)}\leq\exp\left(-b_5\cdot\frac{t^2}{s}\right),
\label{eq:llt-low}
\end{equation}
where the logarithm is the branch chosen continuously from $t=0$, and that
\begin{equation}
        \abs{\psi(t)}\leq\e^{-100x}
        \qquad(c_1s\leq\abs t\leq Q).
\label{eq:llt-high}
\end{equation}
Then
\begin{equation}
        \E\left[g\left(\frac{X-1}{\Delta}\right)\right]>0.
\label{eq:llt-positive}
\end{equation}
In particular, if $X$ is supported on a finite set of reciprocal subsums, at least one such subsum lies in $(1-\Delta,1)$.
\end{lemma}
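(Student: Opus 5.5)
Write the smoothed probability by Fourier inversion using \eqref{eq:scaled-fourier-convention} with $z=X$, and take expectations:
\begin{equation*}
        \E\left[g\left(\frac{X-1}{\Delta}\right)\right]
        =\frac{\Delta}{2\pi}\int_{\mathbb R}\widehat g(\Delta t)\e^{it(\mu-1)}\psi(t)\,dt .
\end{equation*}
The interchange of expectation and integral is justified because $\widehat g$ is Schwartz. I would split the $t$-integral into three ranges: the low range $\abs t\leq c_2 s$, the high range $c_1 s\leq\abs t\leq Q$, and the tail $\abs t>Q$. The low and high ranges overlap because $c_1<c_2$; I would pick the cut at $c_1 s$ (or any point of the overlap) so that the low range uses \eqref{eq:llt-low} and the rest uses \eqref{eq:llt-high}. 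The goal is to show that the low range contributes a positive main term of size $\asymp\Delta\sqrt s$ and that the other two ranges are much smaller.

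\emph{Tail and high range.} On $\abs t>Q$, use $\abs\psi\leq1$ and the rapid decay of $\widehat g$. Since $\Delta Q=\e^{48x}$, the contribution is $\ll\Delta\int_{\abs t>Q}(1+\Delta\abs t)^{-2}\,dt\ll\Delta\cdot\Delta^{-1}\e^{-48x}=\e^{-48x}$. In fact arbitrarily large powers of $\e^{-x}$ are available, and I only need something like $\e^{-10x}$. On $c_1s\leq\abs t\leq Q$, \eqref{eq:llt-high} and $\abs{\widehat g}\leq\int g$ bound the contribution by $\Delta\cdot Q\cdot\e^{-100x}\ll\e^{-52x}$. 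Both are negligible against the main term $\gg\Delta/\sqrt s\cdot s\cdot s^{-1/2}$, which I compute next. Since $s\leq\e^{2x}$, the main term is at least $\Delta\cdot\e^{-x}\cdot O(1)\geq\e^{-4x}$.

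\emph{Low range.} For $\abs t\leq c_1 s$, Taylor-expand $\widehat g(\Delta t)=\widehat g(0)+O(\Delta\abs t)$, where $\widehat g(0)=\int g>0$; the error is tiny since $\Delta s\leq1$. Replace $\psi(t)$ by $\e^{-\sigma^2t^2/2}$ with the error $\abs{\e^{O(\abs t^3/s^2)}-1}\e^{-\sigma^2t^2/2}$ for $\abs t\leq\eta s$, using \eqref{eq:llt-low}. Integrating $\abs t^3s^{-2}\e^{-b_0t^2/(2s)}$ gives $O(s^2\cdot s^{-2})=O(1)$, versus the main Gaussian integral $\asymp\sqrt s$. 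For larger $\abs t$ (say $s^{2/3}\lesssim\abs t\leq c_1s$, where the cubic term is not small) use instead the bound $\abs\psi\leq\e^{-b_5t^2/s}$, giving a contribution $\e^{-b_5s^{1/3}}$; this is $O(1)$ as well. The main term is then
\begin{equation*}
        \int\e^{-\sigma^2t^2/2}\e^{it(\mu-1)}\,dt=\sqrt{2\pi}\sigma^{-1}\e^{-(1-\mu)^2/(2\sigma^2)},
\end{equation*}
and $(1-\mu)^2/\sigma^2\leq b_2^2/(b_0 s)\leq1$, so this is $\gg\sqrt s$. Completing the truncated Gaussian to all of $\mathbb R$ costs only $\e^{-cs}$. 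Altogether,
\begin{equation*}
        \E[g(\cdot)]\geq\frac{\Delta}{2\pi}\left(c\sqrt s\int g-O(1)\right)-O(\e^{-48x}),
\end{equation*}
which is positive once $s\geq C_{\rm LLT}x$ is large and $x\geq x_{\rm LLT}$.

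\emph{Conclusion and main difficulty.} The final assertion follows because $g\geq0$ is supported in $(-1,0)$: a positive expectation forces $X\in(1-\Delta,1)$ with positive probability. The main obstacle is the bookkeeping in the low range, namely checking that the cubic cumulant error and the medium-$t$ region give only $O(1)$ against the $\sqrt s$ main term uniformly in the constants. Here the hypothesis $s\geq C_{\rm LLT}x$ (in particular $s\to\infty$) is what makes the $O(1)$ errors and the $\e^{-b_5s^{1/3}}$ term subordinate.
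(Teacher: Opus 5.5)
Your proposal is correct and follows essentially the paper's route. Both arguments use Fourier inversion, the cumulant expansion on a central range, the decay bound from \eqref{eq:llt-low} on the intermediate range, \eqref{eq:llt-high} on $c_1s\leq\abs t\leq Q$, and Schwartz decay of $\widehat g$ beyond $Q$. The differences are cosmetic: you Taylor-expand $\widehat g(\Delta t)$ at $0$ (this error is $O(\Delta s)=O(1)$ after integrating against $\e^{-b_5t^2/s}$, not pointwise tiny) and evaluate the Gaussian integral in closed form with a cut at $s^{2/3}$, whereas the paper compares with a Gaussian variable $Z$ in physical space and cuts at $A\sqrt s$.

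One slip to fix: the main term is $\gg\Delta\sqrt s\geq\e^{-2x}$, which comfortably dominates the $O(\e^{-48x})$ errors. Your expression ``$\Delta/\sqrt s\cdot s\cdot s^{-1/2}$'', and your use of the upper bound $s\leq\e^{2x}$ to bound it from below, are garbled.
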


\begin{proof}
Let $Z$ be Gaussian with mean $\mu$ and variance $\sigma^2$. Choose $M_g$ such that $\operatorname{supp}g\subset[-M_g,M_g]$, and put $G_g=\int g(y)\,dy>0$. Since $s\leq\e^{2x}=\Delta^{-1}$, we have $\Delta\leq1/s$. Hence, for $y\in\operatorname{supp}g$,
\begin{equation*}
        \abs{1+\Delta y-\mu}
        \leq \frac{b_2+M_g}{s}.
\end{equation*}
Using \eqref{eq:llt-moments}, the Gaussian density therefore satisfies, after increasing $x_{\rm LLT}$ if necessary,
\begin{equation*}
        \frac1{\sqrt{2\pi}\sigma}
        \exp\left(-\frac{(1+\Delta y-\mu)^2}{2\sigma^2}\right)
        \geq c_g\sqrt s
\end{equation*}
with $c_g>0$ depending only on $b_0,b_1,b_2$ and $g$. Consequently
\begin{equation}
        \E\left[g\left(\frac{Z-1}{\Delta}\right)\right]
        =\Delta\int g(y)\frac1{\sqrt{2\pi}\sigma}
        \exp\left(-\frac{(1+\Delta y-\mu)^2}{2\sigma^2}\right)dy
        \geq c_gG_g\Delta\sqrt s.
\label{eq:gaussian-main}
\end{equation}
Set $m_g:=c_gG_g$.

Since $g\in C_c^\infty$, $\widehat g$ is a Schwartz function, and
\begin{equation*}
        \int_{\mathbb R}\abs{\widehat g(\Delta t)}\,dt
        =\Delta^{-1}\int_{\mathbb R}\abs{\widehat g(u)}\,du<\infty.
\end{equation*}
For every outcome of $X$, the absolute value of the integrand in \eqref{eq:scaled-fourier-convention} is bounded by $\abs{\widehat g(\Delta t)}$, so Fourier inversion and Fubini's theorem are justified by absolute integrability. The convention \eqref{eq:fourier-convention} gives
\begin{equation}
        \E\left[g\left(\frac{X-1}{\Delta}\right)\right]
        =\frac{\Delta}{2\pi}\int_{\mathbb R}\widehat g(\Delta t)\e^{it(\mu-1)}\psi(t)\,dt,
\label{eq:llt-fourier-X}
\end{equation}
and the same identity for $Z$ has $\psi(t)$ replaced by $\exp(-\sigma^2t^2/2)$.

Choose $A\geq1$ so large that
\begin{equation}
        C_g\int_{\abs u>A}\e^{-c_g'u^2}\,du\leq \frac{m_g}{8},
\label{eq:A-choice}
\end{equation}
where $C_g,c_g'>0$ are large and small enough constants depending only on the fixed parameters and on $\|\widehat g\|_\infty$. After increasing $C_{\rm LLT}$ and $x_{\rm LLT}$, we may assume throughout the proof that
\begin{equation}
        A\sqrt s\leq \frac{c_1s}{2}.
\label{eq:central-inside-low-frequency}
\end{equation}
Indeed, $s\geq C_{\rm LLT}x$ and $x\geq x_{\rm LLT}$, so \eqref{eq:central-inside-low-frequency} follows once $C_{\rm LLT}x_{\rm LLT}\geq(2A/c_1)^2$.

On $\abs t\leq A\sqrt s$, the low-frequency hypothesis applies by \eqref{eq:central-inside-low-frequency}. Write
\begin{equation*}
        R(t):=\log\psi(t)+\frac{\sigma^2t^2}{2}
\end{equation*}
using the continuous branch from the hypotheses. Then $\abs{R(t)}\leq b_4A^3s^{-1/2}$ on this central range. After increasing $x_{\rm LLT}$, this is at most $1/2$, and therefore $\e^{R(t)}=1+O(R(t))$ uniformly. Hence
\begin{equation*}
        \psi(t)=\e^{-\sigma^2t^2/2}\left(1+O\left(\frac{\abs t^3}{s^2}\right)\right)
\end{equation*}
there, with an implied constant depending only on the fixed parameters. Since $\sigma^2\asymp1/s$ and $\widehat g$ is bounded, the central range contributes at most
\begin{equation*}
        C_A\Delta\int_{\abs t\leq A\sqrt s}\e^{-c t^2/s}\cdot\frac{\abs t^3}{s^2}\,dt
        \leq C_A'\Delta
        \leq \frac{m_g}{8}\cdot\Delta\sqrt s
\end{equation*}
to the difference between \eqref{eq:llt-fourier-X} and its Gaussian analogue, after increasing $x_{\rm LLT}$.

On $A\sqrt s<\abs t<c_1s$, both characteristic functions are bounded by $\exp(-c t^2/s)$, using \eqref{eq:llt-low} for $X$ and \eqref{eq:llt-moments} for $Z$. This interval is contained in $\abs t<c_2s$ because $c_1<c_2$. Therefore this range contributes at most
\begin{equation*}
        C\Delta\sqrt s\int_{\abs u>A}\e^{-cu^2}\,du
        \leq \frac{m_g}{8}\cdot\Delta\sqrt s
\end{equation*}
by the choice of $A$. The Gaussian contribution from $\abs t\geq c_1s$ is
\begin{equation*}
        \ll \Delta\sqrt s\,\e^{-cs},
\end{equation*}
which is at most $(m_g/8)\Delta\sqrt s$ after taking $C_{\rm LLT}$ large.

On $c_1s\leq\abs t\leq Q$, \eqref{eq:llt-high} gives
\begin{equation*}
        \frac{\Delta}{2\pi}\int_{c_1s\leq\abs t\leq Q}\abs{\widehat g(\Delta t)\psi(t)}\,dt
        \leq C\Delta Q\e^{-100x}
        =C\e^{-52x}
        \leq \frac{m_g}{8}\cdot\Delta\sqrt s
\end{equation*}
for $x\geq x_{\rm LLT}$. Finally, since $\widehat g$ is rapidly decreasing and $\Delta Q=\e^{48x}$,
\begin{equation*}
        \Delta\int_{\abs t>Q}\abs{\widehat g(\Delta t)}\,dt
        =\int_{\abs u>\Delta Q}\abs{\widehat g(u)}\,du
        \leq C_B\e^{-96x}
        \leq \frac{m_g}{8}\cdot\Delta\sqrt s
\end{equation*}
after increasing $x_{\rm LLT}$. Combining the four error bounds with \eqref{eq:gaussian-main} shows that
\begin{equation*}
        \E\left[g\left(\frac{X-1}{\Delta}\right)\right]
        \geq \frac{m_g}{2}\cdot\Delta\sqrt s>0.
\end{equation*}
Because $g$ is supported in $(-1,0)$, positivity forces an outcome of $X$ in $(1-\Delta,1)$.
\end{proof}

\subsection{Sparse activation}

\begin{proposition}[Sparse activation]\label{prop:sparse-activation}
Fix constants $0<a_0\leq a_1<\infty$ and $A_\mu\geq1$. There are constants $C_0$ and $x_{\rm SA}$, depending only on $a_0,a_1,A_\mu$, such that the following holds for all $x\geq x_{\rm SA}$.

Set $N=\e^x$, $\Delta=\e^{-2x}$, and $Q=\e^{50x}$. Let $E$ be a nonempty set of $T$ denominator types, all $<N$, equipped with ambient multiplicities $m_n\geq1$. Let $0<\theta\leq1/2$, and put $s=\theta T$. For each $n\in E$, let $r_n$ be an integer with $1\leq r_n\leq m_n$. Assume
\begin{equation}
        \frac{a_0}{s}\leq\frac{r_n}{n}\leq\frac{a_1}{s}
        \qquad(n\in E)
\label{eq:activation-range}
\end{equation}
and
\begin{equation}
        0\leq1-\frac\theta2\sum_{n\in E}\frac{r_n}{n}\leq\frac{A_\mu}{T}.
\label{eq:activation-mean}
\end{equation}
If
\begin{equation}
        \min\left\{s,\frac{T^2}{\Div_Q(E)^2s}\right\}\geq C_0x,
\label{eq:activation-threshold}
\end{equation}
then there are integers $0\leq u_n\leq r_n$ such that
\begin{equation}
        1-\Delta<\sum_{n\in E}\frac{u_n}{n}<1.
\label{eq:activation-conclusion}
\end{equation}
In particular, in applications to an ambient multiset on the same denominator types, the coefficients $u_n$ define a genuine submultiset, because $u_n\leq r_n\leq m_n$ for every $n\in E$.
\end{proposition}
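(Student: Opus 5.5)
The plan is to build the random variable of Lemma~\refnum{lem:box-low-frequency}: independent Bernoulli activations $\xi_n$ of mean $\theta$ and uniform coefficients $V_n\in\{0,\dots,r_n\}$. We then put $X=\sum_{n\in E}\xi_nV_n/n$ and verify every hypothesis of Lemma~\refnum{lem:one-sided-llt}. Each outcome of $X$ is $\sum u_n/n$ with $u_n=\xi_nV_n\in[0,r_n]$, so positivity of $\E[g((X-1)/\Delta)]$ yields \eqref{eq:activation-conclusion}.

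\textbf{Moments and low frequencies.} The mean is $\mu=\frac\theta2\sum r_n/n$. By \eqref{eq:activation-mean}, $0\leq1-\mu\leq A_\mu/T$, and since $s=\theta T\leq T/2$ this is at most $A_\mu/s$; so $b_2=A_\mu$. Lemma~\refnum{lem:box-low-frequency} applies because $s\geq C_0x\geq1$. It gives $\sigma^2\asymp1/s$ and the third-moment bound $\ll s^{-2}$, with constants depending on $a_0,a_1$. It also gives the cumulant expansion and the Gaussian decay \eqref{eq:llt-low} on $\abs t\leq\eta s$. We therefore take $c_2=\eta$ and $c_1=\eta/2$; this fixes $b_0,\dots,b_5,c_1,c_2$ in terms of $a_0,a_1,A_\mu$. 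The range condition $C_{\rm LLT}x\leq s\leq\e^{2x}$ follows from $s\geq C_0x$ once $C_0\geq C_{\rm LLT}$. For the upper bound, $r_n\geq1$ and $r_n/n\leq a_1/s$ give $s\leq a_1n<a_1\e^x\leq\e^{2x}$ for large $x$.

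\textbf{High frequencies.} This is the main step. The factor for $n$ is $\E[\e^{itY_n}]=1-\theta+\theta\DirK_{r_n}(t/n)$, and multiplying by the unimodular factor $\e^{-it\E Y_n}$ does not change the modulus. Lemma~\refnum{lem:dirichlet-factor} with $u=t/n$ gives
\[
\abs{\psi(t)}\leq\exp\Bigl[-c\theta\sum_{n\in E}\min\Bigl\{1,r_n^2\Bigl\|\frac{t}{2\pi n}\Bigr\|^2_{\mathbb R/\mathbb Z}\Bigr\}\Bigr].
\]
For $c_1s\leq\abs t\leq Q$ we apply Lemma~\refnum{lem:divisor-sorting} with $c_\pm=a_0,a_1$. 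This needs $T\geq2C_*\Div_Q(E)$, which follows from \eqref{eq:activation-threshold}: $T^2/(D^2s)\geq C_0x$ and $T\geq s$ give $T^2/D^2\geq C_0xs\geq C_0^2x^2$, so $T/D\geq C_0x\geq2C_*$. The sum is then $\gg\min\{T,T^3/(D^2s^2)\}$. Multiplying by $\theta=s/T$, the exponent is
\[
\gg\min\Bigl\{s,\frac{T^2}{D^2s}\Bigr\}\geq C_0x .
\]
Choosing $C_0$ large compared with the implied constant (which depends only on $a_0,a_1$ through $c_1$) gives $\abs{\psi(t)}\leq\e^{-100x}$, which is \eqref{eq:llt-high}. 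One point needs care. The paper's hierarchy fixes $c_1,c_2$ at stage 2, before $C_*$. So I would check that $\eta$ from Lemma~\refnum{lem:box-low-frequency} is fixed before the divisor-sorting constants that depend on $c_1$. I would also keep the two threshold uses of $C_0$, namely $C_0\geq C_{\rm LLT}$ and $C_0$ large for the exponent, consistent.

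\textbf{Conclusion.} We take $x_{\rm SA}\geq\max\{x_{\rm DS},x_{\rm LLT}\}$, also large enough for $a_1\e^x\leq\e^{2x}$. Lemma~\refnum{lem:one-sided-llt} then gives an outcome in $(1-\Delta,1)$, which is the required choice of $u_n$. The main obstacle is the high-frequency bound: combining the Dirichlet factor estimate with divisor sorting so that the $\theta$ factor exactly converts the loss into the quantity $\min\{s,T^2/(D^2s)\}$ appearing in \eqref{eq:activation-threshold}.
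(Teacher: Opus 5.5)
Your proposal is correct and follows the paper's proof essentially step for step: the same Bernoulli-activated uniform-coefficient model, Lemma~\ref{lem:box-low-frequency} for the moment and low-frequency hypotheses, and the Dirichlet-factor bound combined with no-loss divisor sorting (via $T/D\geq\sqrt{C_0xs}\geq C_0x\geq 2C_*$, then multiplying by $\theta=s/T$) for the high-frequency range, before concluding with Lemma~\ref{lem:one-sided-llt}. The only cosmetic difference is that you bound $s\leq a_1n<a_1\e^x\leq\e^{2x}$, where the paper uses $s\leq T<N=\e^x$.
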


\begin{proof}
Let $\xi_n$ be independent Bernoulli variables of mean $\theta$, let $V_n$ be independent and uniform on $\{0,1,\ldots,r_n\}$, and define
\begin{equation*}
        X:=\sum_{n\in E}\frac{\xi_nV_n}{n}.
\end{equation*}
Write $\mu:=\E[X]$ and $\psi(t):=\E[\e^{it(X-\mu)}]$. The mean condition gives
\begin{equation*}
        0\leq1-\mu\leq\frac{A_\mu}{T}\leq\frac{A_\mu}{s}.
\end{equation*}
Lemma~\refnum{lem:box-low-frequency}, applied with the constants $a_0,a_1$, gives constants $b_0,b_1,b_3,b_4,b_5$ and $c_2>0$ for which the moment bounds and the low-frequency hypotheses of Lemma~\refnum{lem:one-sided-llt} hold. Choose $0<c_1<c_2$. Let $x_{\rm DS}$ be the threshold in Lemma~\refnum{lem:divisor-sorting} with $c_-=a_0$ and $c_+=a_1$, and take the final $x_{\rm SA}$ at least as large as $x_{\rm DS}$ and $x_{\rm LLT}$.

For the high-frequency range, Lemma~\refnum{lem:dirichlet-factor} gives
\begin{equation}
        \abs{\psi(t)}
        \leq
        \exp\left[-c\theta\sum_{n\in E}\min\left\{1,r_n^2\left\|\frac{t}{2\pi n}\right\|_{\mathbb R/\mathbb Z}^2\right\}\right].
\label{eq:activation-product-bound}
\end{equation}
The threshold \eqref{eq:activation-threshold} implies $s\geq C_0x$. Put $D:=\Div_Q(E)$. Since $E$ is nonempty, $D\geq1$, and the second inequality in \eqref{eq:activation-threshold} gives
\begin{equation*}
        \frac{T}{D}\geq \sqrt{C_0xs}\geq C_0x.
\end{equation*}
After increasing $C_0$ and $x_{\rm SA}$, we may assume $C_0x\geq2C_*$ for every $x\geq x_{\rm SA}$. Because $C_*D$ is an integer, this implies $T\geq2C_*D$, so the no-loss estimate \eqref{eq:divisor-sorting} applies. Multiplying that estimate by $\theta=s/T$ gives
\begin{equation*}
\theta\sum_{n\in E}\min\left\{1,r_n^2\left\|\frac{t}{2\pi n}\right\|_{\mathbb R/\mathbb Z}^2\right\}
        \gg \min\left\{s,\frac{T^2}{\Div_Q(E)^2s}\right\}.
\end{equation*}
Combining this with \eqref{eq:activation-product-bound} and increasing $C_0$ once more yields
\begin{equation*}
        \abs{\psi(t)}\leq \e^{-100x}
        \qquad(c_1s\leq\abs t\leq Q).
\end{equation*}

Since $T\leq N=\e^x$, we have $s\leq T\leq\e^x<\e^{2x}$. Also $s\geq C_0x$, and $C_0$ has been chosen at least as large as the local-limit constant $C_{\rm LLT}$. Lemma~\refnum{lem:one-sided-llt} therefore applies and produces an outcome of $X$ in $(1-\Delta,1)$. This outcome has the form \eqref{eq:activation-conclusion}.
\end{proof}

\subsection{Consequences for ratio classes}
Let $E$ satisfy \eqref{eq:ratio-class}, and write
\begin{equation*}
        T=\abs E,
        \qquad
        H=\sum_{n\in E}\frac{m_n}{n}.
\end{equation*}
Then
\begin{equation}
        \frac{\alpha T}{2}<H\leq\alpha T.
\label{eq:H-alpha-T}
\end{equation}
If $E\ne\varnothing$, then $\Div_Q(E)\geq1$, since every occurring denominator satisfies $n<N<Q$ and hence contributes to the divisor count at $q=n$.

\begin{corollary}[Small-ratio activation]\label{cor:full-activation}
There are absolute constants $x_{\rm sr}\geq1$ and $c_{\rm sr},C_{\rm sr},H_{\rm sr}>0$ such that the following holds for every $N$-admissible multiset with $x=\log N\geq x_{\rm sr}$. If $E$ satisfies \eqref{eq:ratio-class} and $\alpha x\leq c_{\rm sr}$, then
\begin{equation}
        H\leq C_{\rm sr}\Div_Q(E)\sqrt{\alpha x}+H_{\rm sr}.
\label{eq:full-activation-bound}
\end{equation}
\end{corollary}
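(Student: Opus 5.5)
We argue by contradiction and reduce everything to a single application of Proposition~\refnum{prop:sparse-activation}, taking the full multiplicities as box lengths. Let $E$ be a nonempty ratio class satisfying \eqref{eq:ratio-class}; the empty case is trivial. Put $T=\abs E$, $H=\sum_{n\in E}m_n/n$ and $D=\Div_Q(E)\geq1$. Suppose that $H>C_{\rm sr}D\sqrt{\alpha x}+H_{\rm sr}$, with constants to be fixed below. We will produce a submultiset of the ambient multiset with reciprocal sum in $(1-\Delta,1)$, which contradicts $N$-admissibility.

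\emph{Choice of parameters.} Take $r_n:=m_n$ for $n\in E$, so that $1\leq r_n\leq m_n$ holds automatically. Take
\begin{equation*}
        \theta:=\frac2H .
\end{equation*}
Requiring $H_{\rm sr}\geq4$ gives $H\geq4$, hence $0<\theta\leq1/2$. With this choice $\frac\theta2\sum_{n\in E} r_n/n=1$ exactly, so the mean condition \eqref{eq:activation-mean} holds with $1-\mu=0$, and any fixed $A_\mu\geq1$ will do (say $A_\mu=1$). By \eqref{eq:H-alpha-T},
\begin{equation*}
        s=\theta T=\frac{2T}{H}\in\left[\frac2\alpha,\frac4\alpha\right).
\end{equation*}
Consequently $r_n/n=m_n/n\in(\alpha/2,\alpha]\subset(1/s,4/s]$. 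So \eqref{eq:activation-range} holds with the fixed constants $a_0=1$ and $a_1=4$, and Proposition~\refnum{prop:sparse-activation} supplies absolute constants $C_0$ and $x_{\rm SA}$. We take $x_{\rm sr}\geq x_{\rm SA}$.

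\emph{Verifying the threshold \eqref{eq:activation-threshold}.} There are two conditions to check.
\begin{enumerate}[label=\textup{(\alph*)},leftmargin=2.5em]
\item The first condition is $s\geq C_0x$. Since $s\geq2/\alpha$, this follows as soon as $\alpha x\leq 2/C_0$, so we set $c_{\rm sr}:=2/C_0$.
\item The second condition is $T^2/(D^2s)\geq C_0x$. Using $T\geq H/\alpha$ and $s<4/\alpha$,
\begin{equation*}
        \frac{T^2}{D^2s}>\frac{H^2}{4\alpha D^2}.
\end{equation*}
This is at least $C_0x$ provided $H\geq 2\sqrt{C_0}\,D\sqrt{\alpha x}$, which holds under our assumption once $C_{\rm sr}:=2\sqrt{C_0}$.
\end{enumerate}

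\emph{Conclusion.} Proposition~\refnum{prop:sparse-activation} now yields integers $0\leq u_n\leq r_n=m_n$ with $1-\Delta<\sum_{n\in E} u_n/n<1$. This is a genuine submultiset of the ambient multiset with reciprocal sum in $(1-\Delta,1)$, contradicting Definition~\refnum{def:admissible}. Hence $H\leq C_{\rm sr}D\sqrt{\alpha x}+H_{\rm sr}$.

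The only delicate point is bookkeeping. The constants $a_0,a_1,A_\mu$ must be fixed before $C_0$ is invoked, which is why we use the exact normalization $\theta=2/H$ rather than a rounded choice. Once $C_0$ is fixed, $c_{\rm sr}$, $C_{\rm sr}$, $H_{\rm sr}$ and $x_{\rm sr}$ are chosen from it, consistent with stage~6 of the hierarchy in Remark~\refnum{rem:constant-hierarchy}. No step seems to pose a genuine obstacle beyond checking that $\theta\leq1/2$, which is guaranteed by taking $H_{\rm sr}\geq4$.
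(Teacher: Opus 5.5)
Your proof is correct and follows the paper's argument essentially verbatim. You take full multiplicities $r_n=m_n$ and the exact normalization $\theta=2/H$, so the mean condition holds with equality, and then make a single application of Proposition~\refnum{prop:sparse-activation}. The only differences are cosmetic: you use $a_0=1$ where the paper weakens to $a_0=1/2$, and you bound $T^2/(D^2s)$ below by $H^2/(4\alpha D^2)$ instead of the paper's $H^2/(2\alpha D^2)$ (obtained via $T=sH/2$), which only changes $C_{\rm sr}$ by a constant factor.
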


\begin{proof}
The case $E=\varnothing$ is trivial. Let $C_{\rm SA}^{\rm sr}$ and $x_{\rm SA}^{\rm sr}$ be the constants supplied by Proposition~\refnum{prop:sparse-activation} with
\begin{equation*}
        a_0=\frac12,\qquad a_1=4,
        \qquad A_\mu=1.
\end{equation*}
Choose
\begin{equation*}
        x_{\rm sr}\geq x_{\rm SA}^{\rm sr},
        \qquad
        0<c_{\rm sr}\leq \frac{2}{C_{\rm SA}^{\rm sr}},
        \qquad
        C_{\rm sr}\geq \sqrt{2C_{\rm SA}^{\rm sr}},
        \qquad
        H_{\rm sr}\geq4.
\end{equation*}
We prove that these choices, after increasing $C_{\rm sr}$ if necessary, are sufficient.

Assume for contradiction that
\begin{equation}
        H>C_{\rm sr}\Div_Q(E)\sqrt{\alpha x}+H_{\rm sr}.
\label{eq:small-ratio-contradiction}
\end{equation}
Set $D:=\Div_Q(E)$, take $r_n=m_n$, put $\theta:=2/H$, and let $s:=\theta T$. Since $H>H_{\rm sr}\geq4$, we have $0<\theta\leq1/2$. By \eqref{eq:H-alpha-T},
\begin{equation}
        \frac2\alpha\leq s=\frac{2T}{H}<\frac4\alpha.
\label{eq:small-ratio-s-range}
\end{equation}
Combining \eqref{eq:ratio-class} with \eqref{eq:small-ratio-s-range} gives, for every $n\in E$,
\begin{equation}
        \frac{1}{2s}\leq \frac{m_n}{n}\leq\frac4s,
\label{eq:small-ratio-activation-range}
\end{equation}
where the lower constant has been weakened from the strict bound $m_n/n>1/s$. The mean condition holds with equality:
\begin{equation*}
        \frac\theta2\sum_{n\in E}\frac{r_n}{n}=\frac\theta2\cdot H=1.
\end{equation*}
The first part of the sparse-activation threshold follows from \eqref{eq:small-ratio-s-range} and $\alpha x\leq c_{\rm sr}$:
\begin{equation}
        s\geq\frac2\alpha\geq\frac{2x}{c_{\rm sr}}
        \geq C_{\rm SA}^{\rm sr}x.
\label{eq:small-ratio-first-threshold}
\end{equation}
Also $T=sH/2$, and \eqref{eq:small-ratio-s-range} gives
\begin{equation}
        \frac{T^2}{D^2s}
        =\frac{sH^2}{4D^2}
        \geq \frac{H^2}{2\alpha D^2}.
\label{eq:small-ratio-second-threshold-base}
\end{equation}
By \eqref{eq:small-ratio-contradiction} and the choice of $C_{\rm sr}$,
\begin{equation*}
        \frac{H^2}{2\alpha D^2}>C_{\rm SA}^{\rm sr}x.
\end{equation*}
Thus \eqref{eq:activation-threshold} holds with the constants used above. Proposition~\refnum{prop:sparse-activation} produces a submultiset of the ambient $N$-admissible multiset with reciprocal sum in $(1-\Delta,1)$, contradicting Definition~\refnum{def:admissible}. This proves the corollary.
\end{proof}

\begin{lemma}[One-sided rounding]\label{lem:one-sided-rounding}
Let $a_1,\ldots,a_T$ be positive real numbers and let $0\leq\eta_i\leq1$. There are choices $\chi_i\in\{0,1\}$ such that
\begin{equation}
        0\leq \sum_{i=1}^T\eta_i a_i-\sum_{i=1}^T\chi_i a_i<\max_i a_i.
\label{eq:one-sided-rounding}
\end{equation}
\end{lemma}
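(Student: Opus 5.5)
A greedy choice will do it. Put $S:=\sum_i\eta_ia_i$ and $M:=\max_ia_i$. We look for a set $I\subseteq\{1,\ldots,T\}$ whose sum $\sum_{i\in I}a_i$ lies in the half-open window $(S-M,S]$. Setting $\chi_i=1$ exactly for $i\in I$ then gives \eqref{eq:one-sided-rounding}.

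First, the trivial ranges. Since $0\leq\eta_i\leq1$, we have $0\leq S\leq\sum_ia_i$. If $S<M$, take $I=\varnothing$: its sum is $0$, and $0\leq S<M$ is exactly what is required.

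Otherwise $S\geq M$. Order the indices arbitrarily and form the partial sums $\Sigma_0=0$ and $\Sigma_k=a_1+\cdots+a_k$. These increase strictly from $0$ to $\Sigma_T=\sum_ia_i\geq S$, and each step $\Sigma_k-\Sigma_{k-1}=a_k$ is at most $M$. Let $k$ be the largest index with $\Sigma_k\leq S$. This index exists because $\Sigma_0=0\leq S$.

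Now split on whether $k=T$.
\begin{itemize}
\item If $k=T$, then $\Sigma_T\leq S\leq\Sigma_T$, so $S=\Sigma_T$ and the difference in \eqref{eq:one-sided-rounding} is $0<M$.
\item If $k<T$, then by maximality of $k$ we have $\Sigma_{k+1}>S$. Hence $S-\Sigma_k<\Sigma_{k+1}-\Sigma_k=a_{k+1}\leq M$.
\end{itemize}
In both cases $0\leq S-\Sigma_k<M$. Taking $I=\{1,\ldots,k\}$, i.e.\ $\chi_i=1$ for $i\leq k$ and $\chi_i=0$ otherwise, proves the lemma.

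The only point needing care is that the inequality against $\max_ia_i$ must be strict. The first-exceed argument handles this, since it compares $S$ with $\Sigma_{k+1}$, which is strictly larger than $S$. It also uses that every $a_i$ is positive, so the partial sums strictly increase.
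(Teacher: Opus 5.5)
Your proof is correct and is essentially the paper's greedy argument: you take the longest prefix whose sum is at most $S$ and bound the residual by the first term that overflows, whereas the paper scans every index, keeps each $a_i$ that still fits, and observes that a residual of at least $\max_i a_i$ would have let some unchosen term be added. Two parts of your write-up can be dropped: the separate case $S<M$, and the appeal to strict monotonicity of the partial sums, since maximality of $k$ alone already gives $\Sigma_{k+1}>S$.
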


\begin{proof}
Let $R=\sum_i\eta_ia_i$. Inspect the indices in any order, and add $a_i$ to a running sum $U$ whenever doing so keeps $U\leq R$. At the end, $U=\sum_i\chi_ia_i\leq R$. If $R-U\geq\max_i a_i$, then every unchosen $a_i$ could still be added, contradicting the stopping rule unless no unchosen indices remain. In the latter case $U=\sum_i a_i\geq R$, hence $U=R$.
\end{proof}

\begin{corollary}[Prime activation]\label{cor:prime-activation}
Fix $c_*>0$. There are constants $x_{\rm pr}=x_{\rm pr}(c_*)\geq1$ and $C_{\rm pr}=C_{\rm pr}(c_*),H_{\rm pr}=H_{\rm pr}(c_*)>0$ such that the following holds for every $N$-admissible multiset with $x=\log N\geq x_{\rm pr}$. If $E$ satisfies \eqref{eq:ratio-class}, consists only of prime denominator types, and $\alpha x\geq c_*$, then
\begin{equation}
        H\leq C_{\rm pr}\alpha\Div_Q(E)x+H_{\rm pr}.
\label{eq:prime-activation-bound}
\end{equation}
\end{corollary}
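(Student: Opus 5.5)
The plan is to reduce to Proposition~\refnum{prop:sparse-activation} after \emph{truncating} the multiplicities. The full multiplicities $r_p=m_p$ cannot be used here. They would force $s\asymp1/\alpha\le x/c_*$, which is below the activation threshold $s\ge C_0x$. Instead we fix an activation scale $s_0:=C'x$, with $C'$ a large constant depending on $c_*$ and on the sparse-activation constant $C_{\rm SA}$ for $(a_0,a_1,A_\mu)=(1/2,4,1)$. We then use only about $p/s_0$ of the $m_p$ available copies of each prime $p$.

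\textbf{Step 1 (discard small primes).} Discard the primes $p\in E$ with $p<s_0$. There are at most $\pi(s_0)\ll x$ of them, each contributing $m_p/p\le\alpha$. Their total contribution is therefore $\ll\alpha x\le\alpha\Div_Q(E)x$, which is absorbed into the claimed bound since $\Div_Q(E)\ge1$. Let $E'$ be the remaining primes, put $T=\abs{E'}$, and let $H$ now denote the mass of $E'$; clearly $H\le\alpha T$.

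\textbf{Step 2 (truncate).} For $p\in E'$ set $r_p:=\lceil p/s_0\rceil$. Since $p\ge s_0$, this gives $p/s_0\le r_p\le2p/s_0$. Stability of the ratio class gives $m_p>\alpha p/2\ge c_*p/(2x)$, so choosing $C'\ge4/c_*$ ensures $r_p\le m_p$. Put $H':=\sum_{p\in E'}r_p/p$, so that $T/s_0\le H'\le2T/s_0$. Set $\theta:=2/H'$ and $s:=\theta T=2T/H'\in[s_0,2s_0]$. Then $r_p/p\in[1/s,4/s]$, so \eqref{eq:activation-range} holds with $a_0=1/2$ and $a_1=4$, and the mean condition \eqref{eq:activation-mean} holds with equality. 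Moreover $s\ge s_0=C'x\ge C_{\rm SA}x$. Integer rounding is already built in through the ceiling, so Lemma~\refnum{lem:one-sided-rounding} is only a fallback in case a cleaner way to hit the mean exactly is preferred. Note that $\Div_Q(E')\le\Div_Q(E)=:D$.

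\textbf{Step 3 (contradiction).} Suppose $H>C_{\rm pr}\alpha Dx+H_{\rm pr}$. From $H\le\alpha T\le2\alpha s_0H'$ we get
\begin{equation*}
H'\ge\frac{H}{2\alpha C'x}>\frac{C_{\rm pr}}{2C'}\,D .
\end{equation*}
The requirement $\theta\le1/2$, that is $H'\ge4$, needs $H$ large compared with $\alpha x$. This follows from $H>C_{\rm pr}\alpha Dx$ once $C_{\rm pr}\ge16C'$.

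For the second threshold, $T=sH'/2$ gives
\begin{equation*}
\frac{T^2}{D^2s}=\frac{s(H')^2}{4D^2}\ge\frac{C'x}{4}\Bigl(\frac{C_{\rm pr}}{2C'}\Bigr)^2,
\end{equation*}
which is at least $C_{\rm SA}x$ once $C_{\rm pr}$ is large. Proposition~\refnum{prop:sparse-activation} then yields $0\le u_p\le r_p\le m_p$ with $\sum_p u_p/p\in(1-\Delta,1)$, contradicting $N$-admissibility.

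The main obstacle is the bookkeeping in Steps 2 and 3. The truncation has to land simultaneously in the sparse-activation window $r_p/p\asymp1/s$, keep $r_p\le m_p$, and leave enough truncated mass $H'$ that the divisor threshold $T^2/(D^2s)\ge C_0x$ is met. The central observation is that truncation costs only a factor $\asymp\alpha x$ in mass, which is exactly the factor $\alpha x$ appearing in \eqref{eq:prime-activation-bound}.
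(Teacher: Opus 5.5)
Your argument is correct, and it takes a different route from the paper's.

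\textbf{The paper's route.} It ties the activation scale to the divisor bound by setting $\theta=1/(MD^{(0)})$. Then $s=T^{(1)}/(MD^{(0)})$, and the divisor threshold becomes the identity $(T^{(1)})^2/((D^{(0)})^2s)=M^2s$. The first threshold $s\gg C_{\rm pr}x/M$ comes from the contradiction hypothesis through $T^{(0)}\geq H^{(0)}/\alpha$. The multiplicities are then scaled proportionally, $z_p=2m_p/(\theta H^{(1)})$, which are not integers. This forces a discard of primes below $\lambda s_0$ (to ensure $z_p>4$) and the one-sided rounding lemma, which restores integrality while keeping $0\leq 1-\frac\theta2\sum r_p/p<1/T^{(1)}$.

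\textbf{Your route.} You pin $s$ at the minimal admissible scale $s\asymp C'x$. You choose integers $r_p=\lceil p/s_0\rceil$ independently of $m_p$, using $\alpha x\geq c_*$ only to guarantee $r_p\leq m_p$. Only then do you choose $\theta=2/H'$, so the mean condition holds with equality. The contradiction hypothesis enters through $H'\gg H/(\alpha x)\gg D$, and this single inequality yields both $\theta\leq 1/2$ and the divisor threshold $T^2/(D^2s)=s(H')^2/(4D^2)\geq C_{\rm SA}x$.

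\textbf{Comparison.} Your version is shorter, needs no rounding lemma, and discards only the $O(x)$ smallest primes. The paper's version makes the divisor threshold automatic rather than a consequence of the size of $H'$. Neither argument actually uses primality beyond counting elements of $E$ below a cutoff.

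\textbf{Two small points.} First, in Step~3 the contradiction must be run with the retained mass. That mass exceeds $(C_{\rm pr}-C')\alpha Dx+H_{\rm pr}$, so $C_{\rm pr}$ must also dominate $C'$; your choice $C_{\rm pr}\geq 16C'$ already covers this. Second, the lower bound $m_p>\alpha p/2$ comes from the ratio-class condition, not from stability.
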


\begin{proof}
The case $E=\varnothing$ is trivial. Let $C_{\rm SA}^{\rm pr}$ and $x_{\rm SA}^{\rm pr}$ be the constants supplied by Proposition~\refnum{prop:sparse-activation} with
\begin{equation*}
        a_0=\frac12,\qquad a_1=8,
        \qquad A_\mu=1.
\end{equation*}
Choose constants in the following order. First choose $\lambda\geq4$. Next choose $M\geq\max\{2,20\lambda\}$. Then choose
\begin{equation}
        C_{\rm pr}\geq
        \max\left\{3MC_{\rm SA}^{\rm pr},\frac{5M}{c_*}\right\},
\label{eq:prime-constant-choice}
\end{equation}
and finally choose $H_{\rm pr}\geq1$ and $x_{\rm pr}\geq x_{\rm SA}^{\rm pr}$, increasing them harmlessly if needed.

Assume for contradiction that the asserted bound fails. With the original set denoted by $E^{(0)}$, write
\begin{equation*}
        T^{(0)}=\abs{E^{(0)}},
        \quad
        H^{(0)}=\sum_{p\in E^{(0)}}\frac{m_p}{p},
        \quad
        D^{(0)}=\Div_Q(E^{(0)}).
\end{equation*}
Then $D^{(0)}\geq1$ and
\begin{equation}
        H^{(0)}> C_{\rm pr}\alpha D^{(0)}x+H_{\rm pr}.
\label{eq:prime-contradiction}
\end{equation}
Put
\begin{equation*}
        s_0:=\frac{T^{(0)}}{MD^{(0)}}.
\end{equation*}
Since $H^{(0)}\leq\alpha T^{(0)}$, \eqref{eq:prime-contradiction} gives
\begin{equation}
        s_0\geq \frac{H^{(0)}}{\alpha MD^{(0)}}
        >\frac{C_{\rm pr}}{M}x.
\label{eq:s0-lower}
\end{equation}

Discard the primes $p<\lambda s_0$, and call the retained set $E^{(1)}$. The discarded mass is at most
\begin{equation*}
        \alpha\#\{p<\lambda s_0:p\in E^{(0)}\}
        \leq \alpha\lambda s_0
        =\frac{\lambda}{MD^{(0)}}\cdot\alpha T^{(0)}.
\end{equation*}
Using $H^{(0)}>\alpha T^{(0)}/2$ and $M\geq20\lambda$, this is at most $H^{(0)}/10$. Therefore
\begin{equation}
        H^{(1)}\geq0.9H^{(0)},
        \qquad
        0.45T^{(0)}<T^{(1)}\leq T^{(0)},
        \qquad
        D^{(1)}:=\Div_Q(E^{(1)})\leq D^{(0)}.
\label{eq:prime-retained-basic}
\end{equation}
The lower bound for $T^{(1)}$ follows from $H^{(1)}\leq\alpha T^{(1)}$ and $H^{(0)}>\alpha T^{(0)}/2$.

Set
\begin{equation*}
        \theta:=\frac1{MD^{(0)}},
        \qquad
        s:=\theta T^{(1)}.
\end{equation*}
Then $0<\theta\leq1/2$ and, by \eqref{eq:prime-retained-basic},
\begin{equation}
        0.45s_0<s\leq s_0.
\label{eq:prime-s-comparable}
\end{equation}
In particular, every retained prime satisfies $p\geq\lambda s_0\geq\lambda s$.

For $p\in E^{(1)}$, define the ideal range
\begin{equation*}
        z_p:=\frac{2m_p}{\theta H^{(1)}}.
\end{equation*}
Since the retained set is still contained in the ratio class \eqref{eq:ratio-class},
\begin{equation*}
        \frac\alpha2<\frac{H^{(1)}}{T^{(1)}}\leq\alpha,
        \qquad
        \frac\alpha2<\frac{m_p}{p}\leq\alpha.
\end{equation*}
Multiplying the first display by $s=\theta T^{(1)}$ gives
\begin{equation*}
        \frac{\alpha s}{2}<\theta H^{(1)}\leq\alpha s,
\end{equation*}
and hence
\begin{equation}
        \frac1s<\frac{z_p}{p}<\frac4s
        \qquad(p\in E^{(1)}).
\label{eq:z-p-comparable}
\end{equation}
Since $p\geq\lambda s$ and $\lambda\geq4$, this gives $z_p>4$. On the other hand,
\begin{equation}
        \frac{z_p}{m_p}=\frac2{\theta H^{(1)}}
        =\frac{2MD^{(0)}}{H^{(1)}}
        \leq \frac{2MD^{(0)}}{0.9C_{\rm pr}\alpha D^{(0)}x}
        \leq \frac{2M}{0.9C_{\rm pr}c_*}
        \leq\frac12
\label{eq:z-below-m}
\end{equation}
by \eqref{eq:prime-contradiction}, $\alpha x\geq c_*$, and \eqref{eq:prime-constant-choice}. Thus $4<z_p\leq m_p/2$.

Write $z_p=\lfloor z_p\rfloor+\eta_p$ with $0\leq\eta_p<1$. Since
\begin{equation*}
        \frac\theta2\sum_{p\in E^{(1)}}\frac{z_p}{p}=1,
\end{equation*}
Lemma~\refnum{lem:one-sided-rounding}, applied with weights $a_p=\theta/(2p)$, gives choices $\chi_p\in\{0,1\}$ such that, with $r_p=\lfloor z_p\rfloor+\chi_p$,
\begin{equation}
        0\leq
        1-\frac\theta2\sum_{p\in E^{(1)}}\frac{r_p}{p}
        <\max_{p\in E^{(1)}}\frac\theta{2p}
        \leq \frac{1}{2\lambda T^{(1)}}
        \leq \frac1{T^{(1)}}.
\label{eq:prime-rounded-mean}
\end{equation}
The bounds $4<z_p\leq m_p/2$ imply
\begin{equation*}
        1\leq r_p\leq z_p+1\leq2z_p\leq m_p,
\end{equation*}
and, using \eqref{eq:z-p-comparable},
\begin{equation}
        \frac{1}{2s}\leq\frac{r_p}{p}\leq\frac8s
        \qquad(p\in E^{(1)}).
\label{eq:prime-activation-range-clean}
\end{equation}

It remains to check the sparse-activation threshold. Since $D^{(1)}\leq D^{(0)}$,
\begin{equation}
        \frac{(T^{(1)})^2}{(D^{(1)})^2s}
        \geq \frac{(T^{(1)})^2}{(D^{(0)})^2s}
        =\frac{M T^{(1)}}{D^{(0)}}
        =M^2s\geq s.
\label{eq:prime-second-threshold}
\end{equation}
Moreover, by \eqref{eq:s0-lower}, \eqref{eq:prime-s-comparable}, and \eqref{eq:prime-constant-choice},
\begin{equation}
        s>0.45s_0>0.45\cdot\frac{C_{\rm pr}}M\cdot x
        \geq C_{\rm SA}^{\rm pr}x.
\label{eq:prime-first-threshold}
\end{equation}
Equations \eqref{eq:prime-rounded-mean}, \eqref{eq:prime-activation-range-clean}, \eqref{eq:prime-second-threshold}, and \eqref{eq:prime-first-threshold} verify all hypotheses of Proposition~\refnum{prop:sparse-activation} for the set $E^{(1)}$. The proposition therefore produces a submultiset of the ambient $N$-admissible multiset with reciprocal sum in $(1-\Delta,1)$, contradicting Definition~\refnum{def:admissible}. This proves the corollary.
\end{proof}

\begin{proof}[Proof of Proposition~\refnum{prop:activation-package}]
Let $x_{\rm sr},c_{\rm sr},C_{\rm sr},H_{\rm sr}$ be the constants in Corollary~\refnum{cor:full-activation}. Apply Corollary~\refnum{cor:prime-activation} with the fixed choice $c_*:=c_{\rm sr}$, and let its constants be $x_{\rm pr},C_{\rm pr},H_{\rm pr}$. Set
\begin{equation*}
        x_{\rm act}:=\max\{x_{\rm sr},x_{\rm pr}\},
        \qquad
        c_0:=c_{\rm sr},
        \qquad
        C:=\max\{C_{\rm sr},C_{\rm pr}\},
        \qquad
        H_0:=\max\{H_{\rm sr},H_{\rm pr}\}.
\end{equation*}
The small-ratio estimate gives part \textup{(i)}, and the prime estimate with $c_*=c_0$ gives part \textup{(ii)}.
\end{proof}

\section{Arithmetic Estimates for Composite Denominators}\label{app:arithmetic}

\begin{lemma}[One-dimensional rough-number bound]\label{lem:rough-number-upper-bound}
Use the convention $\lpf(1)=+\infty$ in this lemma. Uniformly for $z\geq p\geq2$,
\begin{equation}
        \#\{m\leq z:\lpf(m)\geq p\}\ll z\prod_{\ell<p}\left(1-\frac1\ell\right)\ll\frac z{\log p}.
\label{eq:rough-sieve-explicit}
\end{equation}
\end{lemma}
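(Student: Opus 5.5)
The plan is to prove \eqref{eq:rough-sieve-explicit} by an upper-bound sieve. The second inequality is Mertens' theorem, $\prod_{\ell<p}(1-1/\ell)\asymp 1/\log p$, so the real content is the first one, uniformly in the whole range $z\geq p\geq2$.

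Set $P(w)=\prod_{\ell<w}\ell$. The quantity to bound is $S(z,w)=\#\{m\leq z:(m,P(w))=1\}$ with $w=p$; the integer $m=1$ is counted, consistent with the convention $\lpf(1)=+\infty$. I would split into two cases according to the size of $p$ relative to $z$.

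\emph{Case $p\leq z^{1/10}$ (say).} Apply a standard upper-bound sieve, either Brun's pure sieve with a truncation level or, most simply, Selberg's $\Lambda^2$ sieve with level $D=z^{1/2}$. The Selberg sieve gives
\begin{equation*}
S(z,p)\leq \frac{z}{G(\sqrt z)}+O\Bigl(\sum_{d\leq \sqrt z}3^{\omega(d)}\Bigr),
\qquad
G(y)=\sum_{\substack{d\leq y\\ d\mid P(p)}}\frac{\mu^2(d)}{\varphi(d)}.
\end{equation*}
When $\sqrt z\geq p$, one has $G(\sqrt z)\geq\sum_{d\mid P(p),\,d\leq\sqrt z}1/d$. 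In fact one uses the standard lower bound $G(y)\geq\sum_{n\leq y,\ \lpf\text{-free of primes}\geq p}1/n$, which by expanding $\mu^2(d)/\varphi(d)$ as a sum over $p$-smooth $n$ with squarefree kernel $d$ gives $G(y)\geq\sum_{n\leq y,\ P^+(n)<p}1/n$. For $y\geq p$ this is $\gg\log p\asymp\prod_{\ell<p}(1-1/\ell)^{-1}$, since it contains all $n<p$. The remainder is $\ll\sqrt z(\log z)^2\ll z/\log z\leq z/\log p$. Together these give the claim.

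\emph{Case $z^{1/10}<p\leq z$.} Here $\log p\asymp\log z$, so it suffices to show $S(z,p)\ll z/\log z$. This follows by monotonicity in $p$: $S(z,p)\leq S(z,z^{1/10})$, and the first case gives $S(z,z^{1/10})\ll z/\log(z^{1/10})\ll z/\log z\ll z/\log p$. For bounded $z$, or $p=2$, the trivial bound $S\leq z$ suffices.

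I expect the main obstacle to be the uniform lower bound for the Selberg denominator $G(\sqrt z)\gg\log p$, which is the Rankin/smooth-number step. It is handled by the inclusion of all $n<\min(p,\sqrt z)$ noted above, which is exactly why the regime split at $p\leq z^{1/10}$ is needed. Alternatively, one can quote the Brun sieve or the fundamental lemma directly from a standard text (Halberstam--Richert), which yields the first inequality immediately for $p\leq z$.
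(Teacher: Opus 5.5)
Your proposal is correct and follows the paper's route: the Selberg upper-bound sieve on the integers up to $z$, with Mertens' theorem for the product. You write out the details that the paper leaves to its citation of Iwaniec--Kowalski. One small normalization fix: with remainder $\sum_{d\le\sqrt z}3^{\omega(d)}$ the Selberg weights are supported on $d\le z^{1/4}$, so the main term should be $z/G(z^{1/4})$ rather than $z/G(\sqrt z)$; this is harmless, because $p\le z^{1/10}\le z^{1/4}$ still gives $G(z^{1/4})\gg\log p$.
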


\begin{proof}
This is the standard one-dimensional Selberg upper-bound sieve applied to the set of integers up to $z$ and the residue class $0\pmod \ell$ for primes $\ell<p$; see, for example, the Selberg sieve in \cite[Chapter 6]{IwaniecKowalski2004}. The product estimate is Mertens' theorem. The displayed form is the only sieve input used below.
\end{proof}

\begin{proof}[Proof of Lemma~\refnum{lem:small-composites}]
By the hypothesis \eqref{eq:small-composite-stability-hyp}, the left side of \eqref{eq:small-composite-sieve} is at most
\begin{equation*}
        \sum_{\substack{n\leq Z\\ n\ {\rm composite}}}\frac{\lpf(n)}n.
\end{equation*}
Write $n=pm$, where $p=\lpf(n)$. Then $p\leq\sqrt Z$, $m\geq p$, and every prime factor of $m$ is at least $p$. Hence
\begin{equation}
        \sum_{\substack{n\leq Z\\ n\ {\rm composite}}}\frac{\lpf(n)}n
        \leq
        \sum_{\substack{p\leq\sqrt Z\\p\ {\rm prime}}}\ \sum_{\substack{p\leq m\leq Z/p\\ \lpf(m)\geq p}}\frac1m.
\label{eq:small-composite-decomposition}
\end{equation}
By Lemma~\refnum{lem:rough-number-upper-bound}, uniformly for $z\geq p\geq2$,
\begin{equation}
        \#\{m\leq z:\lpf(m)\geq p\}\ll\frac z{\log p}.
\label{eq:rough-sieve}
\end{equation}
Partial summation bounds the inner sum in \eqref{eq:small-composite-decomposition} by
\begin{equation}
        \ll \frac{1+\log(Z/p^2)}{\log p}.
\label{eq:small-composite-inner-sum}
\end{equation}
For $p\leq Z^{1/4}$ we use only the crude consequence of \eqref{eq:small-composite-inner-sum}, namely that the inner sum is $O(\log Z)$. Since there are at most $Z^{1/4}$ such primes, this range contributes
\begin{equation*}
        O(Z^{1/4}\log Z)=O\left(\frac{\sqrt Z}{(\log Z)^2}\right)
\end{equation*}
for all sufficiently large $Z$; the bounded range of $Z$ is absorbed into the implied constant.

For $Z^{1/4}<p\leq\sqrt Z$, decompose into intervals $2^{-k-1}\sqrt Z<p\leq2^{-k}\sqrt Z$. On the $k$th interval, $1+\log(Z/p^2)\ll k+1$, $\log p\gg\log Z$, and Chebyshev's bound gives $O(2^{-k}\sqrt Z/\log Z)$ primes. Summing $\sum_{k\geq0}(k+1)2^{-k}$ proves \eqref{eq:small-composite-sieve}.
\end{proof}

\begin{lemma}[Divisor incidence for large composite cells]\label{lem:large-composite-divisors}
Let $P>x^4$, let $u\geq u_0>2$ be dyadic, and let $E_{P,u}$ be as in Section~\ref{sec:mass}. With $\rho(u)=\rho_P(u)$ and $\beta(P)$ as in \eqref{eq:rho-beta},
\begin{equation}
        \Div_Q(E_{P,u})\leq \beta(P)^{\rho(u)}
\label{eq:large-composite-divisor-incidence}
\end{equation}
provided the absolute constant in the definition of $\beta(P)$ is sufficiently large. Consequently the same bound holds for every $E_{\alpha,P,u}\subseteq E_{P,u}$.
\end{lemma}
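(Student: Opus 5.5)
The plan is to bound, for a fixed positive integer $q\leq 2Q$, the number of $n\in E_{P,u}$ with $n\mid q$. We use only three facts about such $n$: it lies in $[P,2P)$, it is composite, and its prime factors are all large. The third fact comes from stability, exactly as in \eqref{eq:large-composite-u-small}. For $n\in E_{P,u}$ we have $u/2<m_n<\lpf(n)$, so every prime factor of $n$ exceeds $u/2$. Since $n<2P$, the total number of prime factors of $n$ counted with multiplicity satisfies $(u/2)^{\Omega(n)}\leq n<2P$. Hence $\Omega(n)\leq\rho(u)$. The hypothesis $u_0>2$ guarantees that $\log(u/2)>0$, so $\rho(u)$ is well defined.

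Next, fix $q\leq 2Q\leq \e^{51x}$ and let $q'$ be the largest divisor of $q$ all of whose prime factors exceed $u/2$. Every $n\in E_{P,u}$ dividing $q$ divides $q'$. Put $k:=\Omega(q')$. Then $(u/2)^k\leq q'\leq \e^{51x}$, so $k\leq 51x/\log(u/2)$. Each admissible $n$ corresponds to a sub-multiset of the multiset of prime factors of $q'$ of size $j\leq\rho:=\rho(u)$. The number of sub-multisets of a size-$k$ multiset of cardinality $j$ is at most $\binom{k+j}{j}$. Therefore
\begin{equation*}
\#\{n\in E_{P,u}: n\mid q\}\leq\sum_{j\leq\rho}\binom{k+j}{j}\leq(\rho+1)\binom{k+\rho}{\rho}\leq(\rho+1)\Bigl(\e\Bigl(1+\frac k\rho\Bigr)\Bigr)^{\rho}.
\end{equation*}

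The key ratio is $k/\rho$. Since $\rho\geq 1$ and $\rho+1\geq \log(2P)/\log(u/2)$, we have $\rho\geq \tfrac12\log(2P)/\log(u/2)$. Combined with the bound on $k$, this gives
\begin{equation*}
\frac k\rho\leq \frac{102\,x}{\log(2P)}\ll \frac{x}{\log P}.
\end{equation*}
To see that $\rho\geq1$, note that $u\ll\sqrt P$ for nonempty cells. For empty cells the bound is trivial since $\Div_Q(\varnothing)=0$. Hence
\begin{equation*}
\Div_Q(E_{P,u})\leq(\rho+1)\,\bigl(C'(1+x/\log P)\bigr)^\rho.
\end{equation*}
The prefactor $\rho+1\leq 2^\rho$ is absorbed by enlarging the base, so $C_\beta:=2C'$ suffices. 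Finally, $\Div_Q$ is monotone under inclusion, so the bound passes to every $E_{\alpha,P,u}\subseteq E_{P,u}$.

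The main obstacle is getting the base $\beta(P)$ right rather than something like $x$. A naive count of prime factors of $q$, namely $\omega(q)\ll x/\log x$, raised to the power $\rho$, would be far too large. The saving comes from restricting to primes above $u/2$ and measuring $k$ in the same logarithmic units as $\rho$, so that only the ratio $x/\log P$ enters the base. The care needed is in the lower bound $\rho\gg\log(2P)/\log(u/2)$ when $\rho$ is small. That regime is handled by $\rho\geq1$ together with the floor inequality $\rho+1\geq\log(2P)/\log(u/2)$.
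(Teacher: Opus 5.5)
Your proof is correct and follows the paper's route: keep only the prime-factor occurrences of $q$ exceeding $u/2$ (at most $\ll x/\log(u/2)$ of them), count divisors as choices of at most $\rho(u)$ of these, and use $\rho(u)\geq \log(2P)/(2\log(u/2))$ so that the base of the binomial bound is $\ll 1+x/\log P$. The differences are only cosmetic: your multiset count $\binom{k+j}{j}$, and your explicit absorption of the factor $\rho+1$; also, $\rho\geq1$ (indeed $\rho\geq2$) follows most cleanly from the exact inequality $u/2<\lpf(n)<\sqrt{2P}$ rather than from the looser $u\ll\sqrt P$.
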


\begin{proof}
If $E_{P,u}=\varnothing$, there is nothing to prove. Otherwise fix $n\in E_{P,u}$. By stability, $u/2<m_n<\lpf(n)$. Since $n$ is composite and $n<2P$, we also have $\lpf(n)<\sqrt{2P}$. Thus
\begin{equation}
        v:=\log(u/2)<\frac12\log(2P)=\frac Y2,
        \qquad Y:=\log(2P),
\label{eq:rho-floor-range}
\end{equation}
so $Y/v>2$ and
\begin{equation}
        1\leq \frac{Y}{2v}\leq \rho(u)=\left\lfloor\frac Yv\right\rfloor\leq\frac Yv .
\label{eq:rho-floor-comparison}
\end{equation}
This is the only point at which the floor in $\rho(u)$ matters.

Every $n\in E_{P,u}$ has all prime factors larger than $u/2$. Since $n<2P$, it has at most $\rho(u)$ prime factors counted with multiplicity: if $k$ such factors occurred, then $(u/2)^k<n<2P$, hence $k<Y/v$, and therefore $k\leq\rho(u)$.

Fix $q\leq2Q$. The prime factorization of $q$ contains at most
\begin{equation*}
        L_q\leq\frac{\log(2Q)}{v}\ll\frac{x}{v}
\end{equation*}
prime-factor occurrences larger than $u/2$. Because every prime factor of a divisor $n\in E_{P,u}$ is larger than $u/2$, all prime-factor occurrences used to form $n$ must be among these $L_q$ occurrences. Thus every divisor $n\in E_{P,u}$ of $q$ is obtained by choosing at most $\rho(u)$ of them, counted with multiplicity. This may overcount when $q$ has repeated prime factors or when different choices yield the same divisor, which is harmless. Hence, for an absolute constant $C$,
\begin{equation*}
        \#\{n\in E_{P,u}:n\mid q\}
        \leq \sum_{k\leq\rho(u)}\binom{L_q}{k}
        \leq \left(C\left(1+\frac{L_q}{\rho(u)}\right)\right)^{\rho(u)}.
\end{equation*}
By \eqref{eq:rho-floor-comparison} and $Y=\log(2P)\asymp\log P$,
\begin{equation*}
        \frac{L_q}{\rho(u)}\ll\frac{x/v}{Y/v}\ll\frac{x}{\log P}.
\end{equation*}
After increasing the absolute constant $C_\beta$ in
\begin{equation*}
        \beta(P)=C_\beta\left(1+\frac{x}{\log P}\right),
\end{equation*}
the last two displays imply \eqref{eq:large-composite-divisor-incidence}. Taking the maximum over $q\leq2Q$ proves the lemma; subsets $E_{\alpha,P,u}\subseteq E_{P,u}$ inherit the same bound.
\end{proof}

The next estimate controls the dyadic multiplicity summation. In the later application, the variable is $v=\log(u/2)$; the summand is the minimum of a growing exponential $\e^v$ and a second exponential whose relevant crossing occurs at scale $v\asymp2\lambda$. The lemma says that the entire lattice sum is controlled by this crossing scale, namely $\e^{2\lambda}$.

\begin{lemma}[Exponential crossing sum]\label{lem:crossing-sum}
Fix $v_0>0$, $A_0\geq1$, and a lattice spacing $h>0$. There are constants $C$ and $L_0$, depending only on $v_0,A_0,h$, with the following property. Let $L\geq L_0$, let $Y$ and $\lambda$ satisfy
\begin{equation}
        4L-A_0\leq Y\leq \e^L+A_0,
        \qquad
        1\leq\lambda\leq L+A_0,
        \qquad
        \abs{\lambda-(L-\log Y)}\leq A_0.
\label{eq:crossing-parameters}
\end{equation}
For
\begin{equation*}
        \Phi(v):=\frac{\lambda Y}{v}+\frac{L+v-Y}{2},
\end{equation*}
and for every finite subset $\mathcal V$ of an arithmetic progression of spacing $h$ contained in $[v_0,Y/2+A_0]$, one has
\begin{equation}
        \sum_{v\in\mathcal V}\min\{\e^v,\e^{\Phi(v)}\}
        \leq C\e^{2\lambda}.
\label{eq:crossing-sum}
\end{equation}
\end{lemma}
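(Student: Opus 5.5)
Split the sum according to which term attains the minimum, and bound each piece by a geometric-type series anchored at the crossing scale $v\asymp 2\lambda$. Write $F(v):=\Phi(v)-v=\lambda Y/v-(L+v+Y)/2+L$; more usefully,
\begin{equation*}
        \Phi(v)-v=\frac{\lambda Y}{v}+\frac{L-Y-v}{2}.
\end{equation*}
On $[v_0,Y/2+A_0]$ this is strictly decreasing in $v$, since both $\lambda Y/v$ and $-v/2$ decrease. So there is at most one crossing point $v_*$. Below it $\e^v$ is the minimum; above it $\e^{\Phi(v)}$ is. The sum of $\e^v$ over lattice points $v\le v_*$ is $O_h(\e^{v_*})$, because the lattice spacing $h$ makes it a geometric series. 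The whole task is therefore to locate $v_*$ and to show that $\e^{\Phi}$ decays geometrically beyond $v_*$.

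\emph{Locating the crossing.} Solve $\lambda Y/v=(Y-L+v)/2$ approximately. Since $Y\ge 4L-A_0$ and $v\le Y/2+A_0$, the factor $(Y-L+v)/2$ is $\asymp Y$; it lies between roughly $3Y/8$ and $3Y/4$. Hence $v_*\asymp\lambda$. To get the sharp constant $v_*\le 2\lambda+O(1)$, evaluate at $v=2\lambda+K$ with $K$ a large constant:
\begin{equation*}
        \Phi(v)-v\le \frac{Y}{2}\cdot\frac{2\lambda}{2\lambda+K}-\frac{Y-L+2\lambda+K}{2}
        =-\frac{YK}{2(2\lambda+K)}+\frac{L-2\lambda-K}{2}.
\end{equation*}
We need this to be negative. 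The hypothesis $\lambda\approx L-\log Y$ gives $L-2\lambda\approx 2\log Y-L$. I expect this step to be the main obstacle. One cannot afford $v_*=C\lambda$ for $C>2$, so the precise relation in \eqref{eq:crossing-parameters} has to be used here. If the term $(L-2\lambda)/2$ is positive, it is at most $\log Y+O(1)$. Meanwhile $YK/(2(2\lambda+K))\gg Y/L$, which beats $\log Y$ because $Y\ge 4L-A_0$, at least when $Y/L$ is large. In the borderline regime $Y\asymp L$, we have $\lambda\approx L-\log L$, so $L-2\lambda<0$ and there is nothing to check. I would run these two regimes separately, choosing $K$ and $L_0$ in terms of $A_0$. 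The conclusion is $v_*\le 2\lambda+K$, so the $\e^v$ part contributes $O(\e^{2\lambda})$.

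\emph{Tail beyond the crossing.} For $v\ge v_*$, write $\Phi(v)=\Phi(v_*)+\int_{v_*}^v\Phi'$, with
\begin{equation*}
        \Phi'(w)=-\frac{\lambda Y}{w^2}+\frac12 .
\end{equation*}
This derivative is positive once $w>\sqrt{2\lambda Y}$, so $\Phi$ is not monotone and geometric decay of $\e^{\Phi}$ is unavailable directly. Instead use $\Phi(v)<v$ on this range together with the upper endpoint $v\le Y/2+A_0$. At $v=Y/2$ one gets
\begin{equation*}
        \Phi(Y/2)=2\lambda+\frac{L}{2}-\frac{Y}{4}\le 2\lambda+O(1),
\end{equation*}
since $Y\ge 4L-A_0$. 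The function $\Phi$ is convex, so on $[v_*,Y/2+A_0]$ it is bounded by the maximum of its endpoint values, both of which are $\le 2\lambda+O(1)$.

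Counting terms naively would then cost a factor $Y$, which is too much. So I instead bound $\sum\e^{\Phi}$ by splitting at the minimizer $w_0=\sqrt{2\lambda Y}$. On $[v_*,w_0]$ the function decreases, with $\Phi'\le -1/2+\lambda Y/w^2$. On $[w_0,Y/2]$ it increases toward its endpoint value. On each monotone piece I would show $|\Phi'|\ge c>0$ away from an $O(1)$-neighbourhood of $w_0$, provided $\Phi(w_0)$ is far below $2\lambda$, which again uses \eqref{eq:crossing-parameters}. This makes each piece a geometric series bounded by its largest term, which is $\le C\e^{2\lambda}$.
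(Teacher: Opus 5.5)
Your first two steps match the paper: the strict monotonicity of $\Phi(v)-v$, and the bound $v_*\le2\lambda+K$ from the explicit value of $\Phi(2\lambda+K)-(2\lambda+K)$ split into the regimes $Y\le L^2$ and $Y>L^2$. The gap is in the tail. You have $\Phi'(w)=\tfrac12(1-w_0^2/w^2)$ with $w_0=\sqrt{2\lambda Y}$; your later inequality $\Phi'\le-\tfrac12+\lambda Y/w^2$ has the sign reversed. So the condition $|\Phi'|\ge c$ depends only on $w/w_0$, and it fails on an interval of length $\asymp c\,w_0$ around $w_0$, not on an $O(1)$-neighbourhood. Moreover $\Phi''(w_0)=1/w_0$, so $\Phi$ stays within $O(1)$ of its minimum on $\gg\sqrt{w_0}$ consecutive lattice points, and no derivative argument can remove these terms. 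This flat region can be large and lie inside the range. If $Y\approx8\lambda$, which is compatible with \eqref{eq:crossing-parameters} with $Y\asymp L$, then $w_0\approx Y/2$ and the region contains $\gg cY$ points of $\mathcal V$. Splitting at $w_0$ therefore does not avoid the factor-$Y$ counting loss you identified; it only relocates it. Your instinct that $\Phi$ is far below $2\lambda$ there is right: in this example $\Phi(w_0)=L/2$ while $2\lambda=2L-O(\log L)$. But the argument needs $\Phi\le2\lambda-\log Y-O(1)$ uniformly on that region, and the proposal neither states nor proves this.

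The missing input is that your endpoint bounds are far from sharp. Using $Y\ge4L-A_0$, one gets $\Phi(Y/2+A_0)=2\lambda+\tfrac L2-\tfrac Y4+O(1)\le2\lambda-\tfrac Y8+O(1)$, not merely $2\lambda+O(1)$. Your crossing computation, carried one step further, also gives $\Phi(2\lambda+K)\le2\lambda-3\log Y-10$ for $K$ and $L_0$ large. Convexity then yields $\Phi\le2\lambda-3\log Y-10$ on all of $[2\lambda+K,Y/2+A_0]$. Crude counting of its $O(Y/h)$ lattice points then contributes only $O(\e^{2\lambda}Y^{-2})$, and no split at $w_0$ is needed. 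The remaining stretch $[v_*,2\lambda+K]$ can have length $\asymp L$ (e.g.\ $v_*\approx1.7L$ when $Y\approx4L$). On it your decreasing-piece argument is valid, because $\lambda Y/(2\lambda+K)^2\ge3/4$ in both regimes, so $\Phi'\le-1/4$ there. This gives geometric decay from $\e^{\Phi(v_*)}\le\e^{v_*}\le\e^{2\lambda+K}$. This is how the paper finishes the proof.
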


\begin{proof}
All implicit constants in this proof may depend only on $v_0,A_0,h$. Increase $L_0$ whenever needed. Put
\begin{equation*}
        B:=Y/2+A_0,
        \qquad
        w(v):=\Phi(v)-v.
\end{equation*}
The hypotheses imply, uniformly for $L\geq L_0$,
\begin{equation}
        Y\geq3L,
        \qquad
        \log Y\leq L+O_{A_0}(1),
        \qquad
        \lambda=L-\log Y+O_{A_0}(1),
        \qquad
        1\leq\lambda\leq L+A_0.
\label{eq:crossing-basic-bounds}
\end{equation}
Since
\begin{equation*}
        w'(v)=\Phi'(v)-1=-\frac{\lambda Y}{v^2}-\frac12<0,
\end{equation*}
the crossing between $\e^v$ and $\e^{\Phi(v)}$ is unique if it exists. Also
\begin{equation*}
        w(B)=\frac{\lambda Y}{B}+\frac{L+B-Y}{2}-B
        =2\lambda+\frac L2-\frac{3Y}{4}+O_{A_0}(1)<0
\end{equation*}
for $L\geq L_0$, because $Y\geq4L-A_0$ and $\lambda\leq L+A_0$. Hence either $w(v_0)<0$, in which case there is no crossing in $[v_0,B]$, or there is a unique $v_\times\in[v_0,B)$ such that $w(v_\times)=0$. In the no-crossing case set $v_\times:=v_0$. In both cases
\begin{equation}
        \e^{\Phi(v_\times)}\leq \e^{v_\times}.
\label{eq:crossing-value-comparable}
\end{equation}

We next locate the crossing. Choose a constant $C_1=C_1(A_0,v_0)\geq v_0+10$, to be fixed large enough below, and put $V:=2\lambda+C_1$. If $V>B$, then $v_\times\leq B<V$. Assume $V\leq B$. A direct calculation gives
\begin{align}
        \Phi(V)-2\lambda
        &=\frac{\lambda Y}{2\lambda+C_1}+\frac{L+2\lambda+C_1-Y}{2}-2\lambda  \notag\\
        &=\frac L2-\lambda+\frac{C_1}{2}-\frac{C_1Y}{2(2\lambda+C_1)}.
\label{eq:PhiV-revised}
\end{align}
Using \eqref{eq:crossing-basic-bounds}, the right side is at most
\begin{equation}
        -\frac L2+\log Y+O_{A_0}(1)+\frac{C_1}{2}
        -\frac{C_1Y}{4(L+A_0+C_1)}.
\label{eq:PhiV-upper-revised}
\end{equation}
If $Y\leq L^2$, this is $\leq-L/3$ for large $L$. If $Y>L^2$, then choosing $C_1$ sufficiently large in terms of $A_0$ makes the last negative term in \eqref{eq:PhiV-upper-revised} dominate $L/2+4\log Y+O_{A_0,C_1}(1)$ throughout the range $L^2<Y\leq\e^L+A_0$. Therefore, in either case,
\begin{equation}
        \Phi(V)\leq 2\lambda-3\log Y-10<V
\label{eq:PhiV-revised-strong}
\end{equation}
for $L\geq L_0$. Since $w$ is decreasing, $w(V)<0$ implies $v_\times\leq V$. Thus, in all cases,
\begin{equation}
        v_\times\leq 2\lambda+C_1.
\label{eq:crossing-location-revised}
\end{equation}

We need two estimates to sum the lattice. First choose $C_2\geq C_1+10$. There is a constant $c>0$ such that
\begin{equation}
        \Phi'(v)=\frac12-\frac{\lambda Y}{v^2}\leq-c
        \qquad (v\leq \min\{B,2\lambda+C_2\}).
\label{eq:Phi-derivative-revised}
\end{equation}
Indeed, if $Y\leq L^2$, then \eqref{eq:crossing-basic-bounds} gives $\lambda\geq L-2\log L-O_{A_0}(1)$ and $Y\geq3L$, so $\lambda Y/v^2\geq3/4$ for $L\geq L_0$ and $v\leq2\lambda+C_2$. If $Y>L^2$, then
\begin{equation*}
        \frac{\lambda Y}{v^2}\gg\frac{Y}{\lambda+1}\gg\frac{Y}{L+A_0}\gg L,
\end{equation*}
which is again larger than $3/4$ for large $L$.

Second, for $V\leq v\leq B$ one has
\begin{equation}
        \Phi(v)\leq2\lambda-3\log Y-10.
\label{eq:Phi-tail-revised}
\end{equation}
To see this, note that $\Phi''(v)=2\lambda Y/v^3>0$, so $\Phi$ is convex and its maximum on $[V,B]$ occurs at an endpoint. The endpoint $V$ is controlled by \eqref{eq:PhiV-revised-strong}. At the other endpoint,
\begin{equation*}
        \Phi(B)=\frac{\lambda Y}{Y/2+A_0}+\frac{L+Y/2+A_0-Y}{2}
        =2\lambda+\frac L2-\frac Y4+O_{A_0}(1).
\end{equation*}
Since $Y\geq4L-A_0$, the quantity $Y/4-L/2-3\log Y$ tends to $+\infty$ uniformly on the allowed range. Increasing $L_0$ gives \eqref{eq:Phi-tail-revised}.

We now sum over $\mathcal V$. On the portion $v\leq v_\times$, the minimum is at most $\e^v$, so the lattice spacing gives
\begin{equation*}
        \sum_{\substack{v\in\mathcal V\\ v\leq v_\times}}
        \min\{\e^v,\e^{\Phi(v)}\}
        \leq \sum_{\substack{v\in\mathcal V\\ v\leq v_\times}}\e^v
        \leq C_h\e^{v_\times}
        \ll \e^{2\lambda},
\end{equation*}
using \eqref{eq:crossing-location-revised}. On the portion $v_\times<v<V$, the minimum is at most $\e^{\Phi(v)}$. Since $V\leq2\lambda+C_2$ and \eqref{eq:Phi-derivative-revised} applies, for $v\geq v_\times$ we have
\begin{equation*}
        \Phi(v)\leq \Phi(v_\times)-c(v-v_\times).
\end{equation*}
Hence the lattice sum is explicitly bounded by
\begin{equation*}
        \sum_{\substack{v\in\mathcal V\\ v_\times<v<V}}\e^{\Phi(v)}
        \leq C_{h,c}\e^{\Phi(v_\times)}
        \leq C_{h,c}\e^{v_\times}
        \ll \e^{2\lambda},
\end{equation*}
where \eqref{eq:crossing-value-comparable} and \eqref{eq:crossing-location-revised} were used in the final two inequalities. Finally, the remaining portion is contained in $[V,B]$ and has $O_h(Y)$ lattice points. By \eqref{eq:Phi-tail-revised}, its contribution is at most
\begin{equation*}
        O_h\left(Y\e^{2\lambda}Y^{-3}\right)=O_h(\e^{2\lambda}).
\end{equation*}
Combining the three estimates proves \eqref{eq:crossing-sum}.
\end{proof}

\begin{lemma}[Dyadic cell sum]\label{lem:dyadic-cell-sum}
Let $P>x^4$, put $y=\log P$, and define $\rho(u)$ and $\beta(P)$ by \eqref{eq:rho-beta}. For every fixed $u_0>2$,
\begin{equation}
        \sum_{\substack{u\ {\rm dyadic}\\ u_0\leq u\ll\sqrt P}}
        \min\left\{u,\beta(P)^{\rho(u)}\sqrt{\frac{xu}{P}}\right\}
        \ll \beta(P)^2.
\label{eq:dyadic-cell-sum}
\end{equation}
\end{lemma}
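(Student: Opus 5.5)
The plan is to reduce the dyadic sum to Lemma~\refnum{lem:crossing-sum} via the substitution $v=\log(u/2)$, with $L=\log x$, $Y=\log(2P)$, and $\lambda=\log\beta(P)$. Since $u$ runs over a dyadic progression, $v$ runs over an arithmetic progression of spacing $h=\log 2$. Since $u\geq u_0>2$, we have $v\geq v_0:=\log(u_0/2)>0$. The constraint $u\ll\sqrt P$, together with the cell structure (stability gives $u/2<\sqrt{2P}$), places $v\leq Y/2+A_0$ for a suitable absolute $A_0$.

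In these variables $u=2\e^v$, and $\rho(u)=\lfloor Y/v\rfloor\leq Y/v$. Since $\beta(P)\geq C_\beta\geq1$, we get $\beta(P)^{\rho(u)}\leq \e^{\lambda Y/v}$. Also $\sqrt{xu/P}=\sqrt{2x\e^v\cdot 2/(2P)}$, which equals $\exp\bigl((L+v-Y)/2+O(1)\bigr)$. Hence each summand is $\ll\min\{\e^v,\e^{\Phi(v)}\}$ with $\Phi(v)=\lambda Y/v+(L+v-Y)/2$, exactly the function in Lemma~\refnum{lem:crossing-sum}. The constant factors pass through the minimum harmlessly.

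Next I check the parameter hypotheses \eqref{eq:crossing-parameters}.

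The bound on $Y$: from $P>x^4$ we get $Y\geq 4L$, and from $P<N=\e^x$ we get $Y\leq \e^L+\log 2$.

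The bound $\lambda\geq1$: this holds once $C_\beta\geq\e$.

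The bound $\lambda\leq L+A_0$: we have $\lambda=\log C_\beta+\log(1+x/y)$. Since $y\geq 4L$ and $x/y\leq x$, this gives $\lambda\leq \log C_\beta+\log(2x)=L+O(1)$.

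The key comparison $|\lambda-(L-\log Y)|\leq A_0$ is more delicate. Write $\lambda=\log C_\beta+\log\bigl((y+x)/y\bigr)$, so that $\lambda-(L-\log Y)=\log C_\beta+\log\bigl((y+x)/x\bigr)+\log(Y/y)$. The last term is $O(1)$ because $y\geq 4L$ is large. The middle term is $\log(1+y/x)$, which lies in $[0,\log 2]$ because $y=\log P<x$. So the discrepancy is bounded by an absolute $A_0$ that depends only on $C_\beta$. This is acceptable since $C_\beta$ is fixed before $u_0$ and $x_0$ in the constant hierarchy.

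Finally, I require $L\geq L_0$, i.e.\ $x\geq \e^{L_0}$, which is absorbed into the choice of $x_0$. Lemma~\refnum{lem:crossing-sum} then gives the bound $C\e^{2\lambda}=C\beta(P)^2$, which is \eqref{eq:dyadic-cell-sum}.

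The main obstacle I expect is purely bookkeeping at the upper endpoint. I must confirm that the range "$u\ll\sqrt P$" translates to $v\leq Y/2+A_0$ with $A_0$ absolute, not depending on the hidden implied constant in an uncontrolled way. To handle this, I will fix that implied constant once, using the stability argument $u/2<\lpf(n)<\sqrt{2P}$, and take $A_0$ as the maximum of the resulting constants. I must also confirm that $C$ and $L_0$ from Lemma~\refnum{lem:crossing-sum} depend only on $v_0=\log(u_0/2)$, $A_0$, and $h=\log2$, so that the final implied constant is absolute for each fixed $u_0$, as claimed.
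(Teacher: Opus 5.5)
Your proposal is correct and follows the paper's proof: the same substitution $v=\log(u/2)$, $Y=\log(2P)$, $L=\log x$, $\lambda=\log\beta(P)$, the bound of each summand by a constant times $\min\{\e^v,\e^{\Phi(v)}\}$, and an application of Lemma~\refnum{lem:crossing-sum} with $h=\log 2$ after checking its hypotheses. Your identity $\lambda-(L-\log Y)=\log C_\beta+\log(1+y/x)+\log(Y/y)$ is a slightly cleaner route to the key comparison than the paper's asymptotic $\log(1+x/y)=L-\log y+O(1)$.
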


\begin{proof}
The constants may depend on $u_0$. The auxiliary constant $A_0$ below is allowed to depend on the previously fixed value of $C_\beta$; this is harmless because $C_\beta$ is chosen before the global threshold $x_0$ in Remark~\refnum{rem:constant-hierarchy}. Write
\begin{equation*}
        Y:=\log(2P),\qquad L:=\log x,
        \qquad \lambda:=\log\beta(P),
        \qquad v:=\log(u/2).
\end{equation*}
We verify explicitly that the parameters satisfy Lemma~\refnum{lem:crossing-sum}. Since $x^4<P<N=\e^x$, we have
\begin{equation}
        4L<y<x=\e^L,
        \qquad
        Y=y+O(1),
\label{eq:dyadic-Y-range}
\end{equation}
and therefore, after increasing a fixed constant $A_0$,
\begin{equation}
        4L-A_0\leq Y\leq \e^L+A_0.
\label{eq:dyadic-crossing-Y}
\end{equation}
In the same range, $x/y\geq1$ and $Y/y=1+O(1/y)=1+O(1/L)$. Thus, with $C_\beta$ fixed sufficiently large in
\begin{equation*}
        \beta(P)=C_\beta\left(1+\frac{x}{y}\right),
\end{equation*}
we have
\begin{equation}
        1\leq\lambda\leq L+A_0,
        \qquad
        \abs{\lambda-(L-\log Y)}\leq A_0.
\label{eq:dyadic-crossing-lambda}
\end{equation}
Indeed, $\log(1+x/y)=L-\log y+O(1)$ uniformly for $4L<y<x$, and $\log y=\log Y+O(1/L)$.

The dyadic values of $u$ give values of $v=\log(u/2)$ in an arithmetic progression of spacing $\log2$. The lower endpoint is at least $v_0:=\log(u_0/2)>0$. The upper condition $u\ll\sqrt P$ gives
\begin{equation}
        v\leq\frac12\log P+O(1)=\frac Y2+O(1),
\label{eq:dyadic-crossing-v-range}
\end{equation}
so, after enlarging the same $A_0$, all relevant $v$ lie in $[v_0,Y/2+A_0]$. Equations \eqref{eq:dyadic-crossing-Y}, \eqref{eq:dyadic-crossing-lambda}, and \eqref{eq:dyadic-crossing-v-range} are precisely the hypotheses needed to apply Lemma~\refnum{lem:crossing-sum}, with lattice spacing $h=\log2$.

It remains only to translate the summand. Since $\rho(u)=\lfloor Y/v\rfloor\leq Y/v$ and $u=2\e^v$, the second term in the minimum is at most a constant multiple of
\begin{equation*}
        \exp\left(\frac{\lambda Y}{v}+\frac{L+v-Y}{2}\right),
\end{equation*}
while the first term is at most a constant multiple of $\e^v$. Applying Lemma~\refnum{lem:crossing-sum} gives
\begin{equation*}
        \sum_u\min\left\{u,\beta(P)^{\rho(u)}\sqrt{\frac{xu}{P}}\right\}
        \ll \e^{2\lambda}=\beta(P)^2,
\end{equation*}
as required.
\end{proof}

\section*{AI Acknowledgement}

The author was assisted by GPT-5.5 Pro extensively during the development and writing of this manuscript. The main conceptual reductions and proof strategy, including the compression framework, the sparse activation method, and the box-moment estimates were developed by the author. AI assistance was used to help fill in many of the technical details needed to make these ideas rigorous, including parts of the Fourier-analytic implementation of the activation argument, the high-frequency divisor-sorting analysis, the one-sided smoothed local-limit step, the divisor-incidence estimates for large composite denominator cells, and the exponential crossing sum used in the large-composite dyadic analysis.

AI assistance was also used in drafting and revising the manuscript, organizing the proof, checking constants and dependencies, suggesting clarifications, and identifying points where additional rigor was needed. The author reviewed, modified, and verified the arguments and assumes full responsibility for the final form and correctness of the paper.


\small
\begin{thebibliography}{99}

\bibitem{Bloom2025}
T. F. Bloom,
\newblock On a density conjecture about unit fractions,
\newblock \emph{J. Eur. Math. Soc.} \textbf{27} (2025), no. 11, 4563--4589.

\bibitem{BloomProblem312}
T. F. Bloom,
\newblock Erd\H{o}s Problem \#312,
\newblock \emph{Erd\H{o}s Problems}, \url{https://www.erdosproblems.com/312}, accessed July 5, 2026.

\bibitem{Croot2001}
E. S. Croot III,
\newblock On unit fractions with denominators in short intervals,
\newblock \emph{Acta Arith.} \textbf{99} (2001), no. 2, 99--114.

\bibitem{Croot2003}
E. S. Croot III,
\newblock On a coloring conjecture about unit fractions,
\newblock \emph{Ann. of Math.} \textbf{157} (2003), no. 2, 545--556.

\bibitem{ErdosGraham1980}
P. Erd\H{o}s and R. L. Graham,
\newblock \emph{Old and New Problems and Results in Combinatorial Number Theory},
\newblock Monographies de L'Enseignement Math\'ematique, vol. 28, Universit\'e de Gen\`eve, 1980.

\bibitem{IwaniecKowalski2004}
H. Iwaniec and E. Kowalski,
\newblock \emph{Analytic Number Theory},
\newblock American Mathematical Society Colloquium Publications, vol. 53, American Mathematical Society, Providence, RI, 2004.

\bibitem{LiuSawhney2026}
Y. P. Liu and M. Sawhney,
\newblock On further questions regarding unit fractions,
\newblock \emph{Int. Math. Res. Not. IMRN} 2026, no. 2, rnaf382.

\end{thebibliography}
\end{document}